\newtheorem{thm}{Theorem}[section]
\newtheorem*{thm*}{Theorem}
\newtheorem{corol}[thm]{Corollary}
\newtheorem{claim}[thm]{Claim}
\newtheorem{lemma}[thm]{Lemma}
\newtheorem{prop}[thm]{Proposition}
\theoremstyle{definition}
\newtheorem{defi}[thm]{Definition}
\theoremstyle{remark}
\newtheorem{remark}[thm]{Remark}
\newtheorem{example}[thm]{Example}
\numberwithin{equation}{section}
\def \Z {{\Bbb Z}}
\def \seq {\subseteq}
\def \copt {\tau_{co}}
\def \eps {\varepsilon}
\def\N {{\mathbb N}}
\def\Z {{\mathbb Z}}
\def\Q {{\mathbb Q}}
\def\Aut{{\mathrm {Aut}}}
\def\Inn{{\mathrm {Inn}}}
\def\H{{\mathrm{H}}\,}
\def\co{{\,{:}\,}}
\begin{document}

\vspace{0.5in}

\renewcommand{\sc}{\scshape}
\vspace{0.5in}

\title{Minimality of the Semidirect Product}

\author{Michael Megrelishvili}
\email{megereli@math.biu.ac.il}

\author{Luie Polev}
\email{luiepolev@gmail.com}

\author{Menachem Shlossberg}
\email{shlosbm@macs.biu.ac.il}
\address{Department of Mathematics;	Bar-Ilan University; 52900 Ramat-Gan, Israel}



\subjclass[2010]{54H11, 57S05, 54F05, 22C05}

\keywords{minimal topological group, automorphism group, ordered space, semidirect product.}

\begin{abstract}
	A topological group is minimal if it does not admit a strictly coarser Hausdorff group topology. 
 We provide a sufficient and necessary condition for the minimality of the semidirect product $G\leftthreetimes P,$ where $G$ is a compact topological group and $P$ is a topological subgroup of $\Aut(G)$. 
    	We prove that $G\leftthreetimes P$ 
    	is minimal for every closed subgroup $P$ of $\Aut(G)$. 
    	In case $G$ is abelian, the same is true for every subgroup $P \subseteq \Aut(G)$.
 We show, in contrast, that there exist a compact two-step nilpotent group $G$ and a subgroup $P$ of $\Aut(G)$ such that $G\leftthreetimes P$ is not minimal. 
 This answers a question of Dikranjan.
    Some of our results were inspired by a work of Gamarnik \cite{Gam}.
\end{abstract}

\maketitle

\section{Introduction}
A Hausdorff topological group $G$ is \textit{minimal} (\cite{Doitch}, \cite{Steph}) if it does not admit a strictly coarser Hausdorff group topology or, equivalently, if every injective continuous group homomorphism $G \to P$ into a Hausdorff topological group is a topological group embedding.
For information on minimal groups we refer to the surveys \cite{CHR}, \cite{DikrSurv},
\cite{MegDik} and the book \cite{DPS}.

In \cite{MP1}
the two first-named authors study the minimality of the group $\H_+(X)$, where $X$ is a compact linearly ordered space and $\H_+(X)$ is the topological group of all order-preserving homeomorphisms
of $X$. In general, $\H_+(X)$ need not be minimal.
The first result in the present paper is Theorem \ref{not_pi_uniform_LOTS}, which shows that for a compact (partially) ordered spaces $X$ the compact-open topology on $\H_+(X,\leq)$ is minimal within the class of $\pi$-\emph{uniform topologies} (in the sense of \cite{Meg}). This result was inspired by results of Gamarnik \cite{Gam} and of the
first-named author \cite{Meg}. Following Nachbin \cite{Nachbin}, by a \emph{partially ordered topological space} we mean a topological space $X$ equipped with a partial order which is closed in $X \times X$.

Let $G$ be a compact topological group and denote  by $\Aut(G)$ its group of automorphisms. With the compact-open topology, $\Aut(G)$ becomes a topological group (which is not necessarily minimal). We denote its compact subgroup of inner automorphisms by $\Inn(G).$ 
The center of $G$ is denoted by $Z(G).$

One of the main objectives of this paper is to prove that
the semidirect product $G\leftthreetimes P$ is minimal for every closed subgroup $P$ of $\Aut(G)$ (Theorem \ref{t:Aut}). Using this result, as well as the Minimality Criterion,  the minimal groups $G\leftthreetimes P$ are fully characterized (Theorem \ref{thm:sol}).
This characterization shows, in particular, that if $G$ is abelian then $G\leftthreetimes P$ is minimal for every (not necessarily closed)
subgroup $P$ of $\Aut(G)$ (Corollary \ref{cor:min}). Furthermore, when $G$ is not abelian, 
the condition that $P$ is a closed subgroup of $\Aut(G)$ is essential to ensure the minimality of $G\leftthreetimes P$. Indeed,
negatively answering a question of Dikranjan,  
 in Example \ref{nil} we show that there exist a compact two-step nilpotent group $G$ and a subgroup $P$ of $\Aut(G)$ such that $G\leftthreetimes P$ is not minimal.

Note also that the compactness of $G$ cannot be replaced by precompactness. Indeed, 
there exist a minimal
precompact group $G$ and a two-element subgroup $P$ of $\Aut(G)$ such that  $G \leftthreetimes P$  is not minimal. See Eberhardt-Dierolf-Schwanengel \cite[Example 10]{Eberhardt} and also \cite[Example 4.6]{MegDik}. 
The latter example also demonstrates that,
in general, for two arbitrary minimal groups $G$ and $H $ the topological semidirect
product $G\leftthreetimes H$ may fail to be minimal. However, adding the requirement of completeness of $G$, one has the following:

\begin{lemma} \label{survey4.3} \textnormal{\cite[Theorem $4.3$]{MegDik}}
    If $G$ is complete (with respect to its two-sided uniformity), then the semidirect product $G\leftthreetimes H$ is minimal for minimal groups $G$ and $H$.
\end{lemma}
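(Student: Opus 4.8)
The plan is to show that the given topology $\tau$ on $L:=G\leftthreetimes H$ admits no strictly coarser Hausdorff group topology. So let $\sigma\subseteq\tau$ be any coarser Hausdorff group topology on $L$; I will prove $\sigma=\tau$. Write $\tau_G,\tau_H$ for the given topologies and view $G$ and $H$ as ($\tau$-closed) subgroups of $L$ via the canonical embeddings, so that set-theoretically $L=G\cdot H$, the subgroup $G$ is normal, and the coordinate map $\psi\colon (g,h)\mapsto gh$ from $G\times H$ to $L$ is a $\tau$-homeomorphism.

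First, since $\sigma|_G$ is a Hausdorff group topology on $G$ coarser than $\tau_G$, minimality of $G$ forces $\sigma|_G=\tau_G$; likewise $\sigma|_H=\tau_H$. Next I would invoke completeness of $G$: the two-sided uniformity of a topological group is determined by its topology, and for a subgroup it is the restriction of the ambient two-sided uniformity; hence $G$, sitting inside $(L,\sigma)$, carries exactly the (complete) two-sided uniformity of $(G,\tau_G)$. A complete subspace of a Hausdorff uniform space is closed, so $G$ is $\sigma$-closed in $L$. Being also normal, it yields a Hausdorff topological quotient $(L/G,\sigma/G)$. For a semidirect product one has the topological isomorphism $L/G\cong H$ carrying $\tau/G$ to $\tau_H$, and since $\sigma\subseteq\tau$ the quotient topology $\sigma/G$ is coarser than $\tau/G$; minimality of $H$ then gives $\sigma/G=\tau/G$. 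Consequently the canonical projection $p_H\colon (L,\sigma)\to(H,\tau_H)$, which factors through $L/G$, is continuous.

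The next step is to deduce that the projection $p_G\colon L\to G$, $gh\mapsto g$, is $\sigma$-continuous. Indeed $p_G(l)=l\cdot p_H(l)^{-1}$; since the inclusion $(H,\tau_H)=(H,\sigma|_H)\hookrightarrow(L,\sigma)$ is continuous, the map $l\mapsto p_H(l)^{-1}$ is $\sigma$-continuous into $(L,\sigma)$, hence so is $l\mapsto l\cdot p_H(l)^{-1}$ by continuity of multiplication; its image lies in $G$ and $\sigma|_G=\tau_G$, so $p_G\colon(L,\sigma)\to(G,\tau_G)$ is continuous. Therefore $(p_G,p_H)\colon(L,\sigma)\to(G,\tau_G)\times(H,\tau_H)$ is a continuous bijection whose inverse is the coordinate multiplication $\psi$; and $\psi$ is also continuous into $(L,\sigma)$, since multiplication in $(L,\sigma)$ is continuous and $(G,\tau_G)\times(H,\tau_H)=(G,\sigma|_G)\times(H,\sigma|_H)$. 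Thus $\psi$ is simultaneously a homeomorphism onto $(L,\sigma)$ and onto $(L,\tau)$, whence $\sigma=\tau$, and $(L,\tau)$ is minimal.

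I expect the only genuine subtlety to be the completeness/closedness step: one must make sure that the two-sided uniformity induced on the subgroup $G$ from $(L,\sigma)$ really coincides with the intrinsic two-sided uniformity of $(G,\tau_G)$ (so that completeness can be invoked), and that ``complete $\Rightarrow$ closed'' is applied correctly in the Hausdorff space $(L,\sigma)$. Everything else is bookkeeping of which maps are continuous for which topology, together with the standard facts that the quotient of a topological group by a closed normal subgroup is Hausdorff and that $L/G\cong H$ for a semidirect product. (Alternatively one could phrase the last two paragraphs through a Minimality Criterion applied to the $\sigma$-closed normal subgroup $G$, but the direct coordinate-map argument above seems cleanest.)
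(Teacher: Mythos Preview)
The paper does not give its own proof of this lemma; it is simply quoted from \cite[Theorem 4.3]{MegDik}. So there is nothing in the paper to compare against directly.

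Your argument is correct. Two remarks. First, once you have established $\sigma|_G=\tau|_G$ and $\sigma/G=\tau/G$, you could finish in one line by invoking Merson's Lemma (Lemma~\ref{merson} in this paper) rather than writing out the coordinate-projection argument; your Steps~7--9 are essentially an explicit unpacking of that lemma, and this is exactly the tool the paper uses in its own proof of Theorem~\ref{t:Aut}. Second, the point you flag as the ``only genuine subtlety'' is indeed the crux: the restriction to a subgroup of the left (resp.\ right) uniformity of $(L,\sigma)$ is the left (resp.\ right) uniformity of $(G,\sigma|_G)=(G,\tau_G)$, hence the same holds for their supremum, the two-sided uniformity; completeness of $G$ then forces $G$ to be $\sigma$-closed, so the quotient $(L/G,\sigma/G)$ is Hausdorff and minimality of $H$ can be applied. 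With that step secured, the rest is routine.
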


\begin{remark} \label{r:AutNotMin}
    Let $G$ be a complete minimal topological group and assume that $\Aut(G)$ is minimal. Then $G \leftthreetimes \Aut(G)$ is minimal by Lemma \ref{survey4.3}.
    So, in view of Theorem \ref{t:Aut}, it is important to note that
    there are compact groups $G$ such that $\Aut(G)$ is not minimal. Indeed, as it was explained in \cite[Section 5]{MegDik}, one may take   $G=(\Q,\textup{discrete})^*$, that is the Pontryagin dual of the discrete group $\Q$ of the rational numbers.
\end{remark}
For additional results concerning the minimality of $\Aut(G)$ see \cite{MegDik}.
For the minimality of semidirect products see for example \cite[Section 7.2]{DPS} and \cite[Section 4]{MegDik}.
More results about minimality of the homeomorphism groups 
can be found in \cite{Gartside,vanMill2012,MegDik,ChangGartside}.

\section{Preliminaries}
All topological spaces are assumed to be Hausdorff and completely regular, unless stated otherwise. 
The closure of a subset $A$ in a topological space will be denoted by $\overline{A}$.  
In what follows, every compact topological space will be considered as a uniform space with respect to its natural (unique) uniformity.
For a topological space $X$ we denote by $\H(X)$ its group of homeomorphisms and, if $X$ is ordered, $\H_+(X)$ denotes the group of all order-preserving homeomorphisms of $X$. With the compact-open topology $\tau_{co}$, $\H(X)$ becomes a topological group for every compact space $X$.

For a topological group $(G,\gamma)$ and its subgroup $H$ denote by $\gamma / H$ the natural quotient topology on the coset space $G/H$.

The following useful lemma can be found, for example, in \cite[Lemma $7.2.3$]{DPS}.
\begin{lemma}  \textnormal{(Merson's Lemma)} \label{merson}
    Let $(G, \gamma)$ be a not necessarily Hausdorff topological group and $H$ be a not necessarily closed subgroup of $G$. If $\gamma_1 \seq \gamma$ is a coarser group topology on $G$ such that $\gamma_1|_H=\gamma|_H$ and $\gamma_1/H=\gamma/H$, then $\gamma_1=\gamma$.
\end{lemma}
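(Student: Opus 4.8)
The plan is to establish $\gamma \subseteq \gamma_1$, since $\gamma_1 \subseteq \gamma$ is assumed; by homogeneity of group topologies it suffices to show that every $\gamma$-open set $U$ with $e \in U$ is a $\gamma_1$-neighborhood of $e$, and I would do this by manufacturing a $\gamma_1$-open $N$ with $e \in N \subseteq U$ out of the two hypotheses. First I would repackage those hypotheses. From $\gamma_1/H = \gamma/H$ and the openness of the quotient map $q \colon G \to G/H$: for every $\gamma$-neighborhood $W$ of $e$ there is a $\gamma_1$-open $A \ni e$ with $A \subseteq WH$. Indeed $q(W)$ is a neighborhood of $eH$ both for $\gamma/H$ and, by hypothesis, for $\gamma_1/H$; pulling its $\gamma_1/H$-interior back along the continuous map $q \colon (G,\gamma_1) \to (G/H,\gamma_1/H)$ yields such an $A$ inside $q^{-1}(q(W)) = WH$. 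From $\gamma_1|_H = \gamma|_H$: for every $\gamma$-neighborhood $W$ of $e$ there is a $\gamma_1$-open $O \ni e$ with $O \cap H \subseteq W$, and, shrinking $O$ by continuity of the group operations in $(G,\gamma_1)$, we may take $O$ symmetric with $O^2 \cap H \subseteq W$.

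Now I would fix a symmetric $\gamma$-open $V \ni e$ with $V^2 \subseteq U$, then a symmetric $\gamma_1$-open $O \ni e$ with $O^2 \cap H \subseteq V$ as above. The key move is the next choice: since $\gamma_1 \subseteq \gamma$, the set $O \cap V$ is a $\gamma$-neighborhood of $e$, so I may pick a symmetric $\gamma$-open $V' \ni e$ with $V' \subseteq O \cap V$. Feeding $V'$ into the quotient reformulation produces a $\gamma_1$-open $A \ni e$ with $A \subseteq V'H$. Set $N := A \cap O$, a $\gamma_1$-open neighborhood of $e$. Then $N \subseteq U$: if $g \in N$, then $g \in V'H$, so $g = v'h$ with $v' \in V'$ and $h \in H$; since $(v')^{-1} \in V' \subseteq O$ and $g \in O$, we get $h = (v')^{-1}g \in O^2$, hence $h \in O^2 \cap H \subseteq V$; as also $v' \in V' \subseteq V$, this gives $g = v'h \in V^2 \subseteq U$. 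Thus $U$ is a $\gamma_1$-neighborhood of $e$, completing the argument.

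The step I expect to be the real obstacle is precisely this coordination: the $\gamma$-neighborhood $V'$ fed into the quotient hypothesis must be chosen inside the $\gamma_1$-neighborhood $O$ supplied by the subgroup hypothesis. That is what makes the factorization $g = v'h$ productive --- it simultaneously keeps the $H$-part $h = (v')^{-1}g$ within $O^2$ (hence small on $H$) and keeps $v'$ within $V$. A naive ``$g \in VH$, write $g = vh$'' attempt fails exactly because $v^{-1}g$ is then uncontrolled. Everything else is routine: the reduction to neighborhoods of $e$, the reformulation of the two hypotheses, and the bookkeeping with symmetric neighborhoods. I would also note that neither normality of $H$ nor Hausdorffness of $\gamma$ enters, in accordance with the stated generality.
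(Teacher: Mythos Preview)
Your argument is correct. The paper does not supply a proof of Merson's Lemma; it simply cites \cite[Lemma~7.2.3]{DPS} and uses the result as a black box. What you have written is precisely the standard proof that appears in that reference: reduce to neighborhoods of $e$, use $\gamma_1/H=\gamma/H$ to trap a $\gamma_1$-neighborhood of $e$ inside $V'H$, use $\gamma_1|_H=\gamma|_H$ to control the $H$-factor, and combine by the coordination $V'\subseteq O\cap V$ that you rightly single out as the crucial step. There is nothing to compare against in the paper itself; your write-up would serve perfectly well as the missing proof.
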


\subsection{$\pi$-uniform actions}

Let $\pi \co G \times X \to X$ be an action of a topological group $G$ on a topological space $X$.
Define two maps:
\begin{enumerate}
    \item \textit{$g$-translation}: $\pi ^g \co X \to X$, $\pi ^g (x)=gx$;
    \item \textit{$x$-orbit}: $\pi_x \co G \to X$, $\pi_x (g)=gx$.
\end{enumerate}

For a topological group $(G,\tau)$ we denote by $e_G$ (or simply $e$ when $G$ is understood) the identity element of $G$, and by $N_{g_0}(\tau)$ we denote the local base of $G$ at $g_0\in G$.

\begin{defi} \label{d:pi} \cite{Me-t,Meg}
    Let $\pi \co G \times X \to X$ be an action of a topological group $(G,\tau)$ on a Hausdorff uniform space $(X,\mathcal U)$.
    The uniformity (or, the action) is said to be:
    \begin{enumerate} \label{bounded}
        \item \textit{$\pi$-uniform at $e$} or \textit{quasibounded} if for every $\varepsilon \in \mathcal U$ there exist $\delta \in \mathcal U$ and $U\in N_e(\tau)$ such that $(gx,gy) \in  \varepsilon$ for every $(x,y) \in \delta$ and $g \in U$.
        \item \textit{$\pi$-uniform} if for every $g_0 \in G$ and for every $\varepsilon \in \mathcal U$ there exist $\delta \in \mathcal U$ and $U\in N_{g_0}(\tau)$ such that $(gx,gy) \in  \varepsilon$ for every $(x,y) \in \delta$ and $g \in U$.
    \end{enumerate}
\end{defi}

The notion of a $\pi$-uniform action, defined in \cite{Me-t, Meg}, was originally used to study compactifications of $G$-spaces. Later it was employed by Gamarnik \cite{Gam} to prove that for a compact space $X$, the compact-open topology on $\H(X)$ is minimal within the class of $\pi$-uniform topologies. More applications of $\pi$-uniformity can be found in \cite{Kozlov} and in \cite{Chatyrko}.

\begin{defi} \label{d:unif}
    Let $X$ be a compact space and $G$ a subgroup of $\H(X)$. A Hausdorff group topology $\tau$ on $G$ is said to be \textit{$\pi$-uniform} if the natural action $(G,\tau) \times X \to X$ is $\pi$-uniform with respect to the unique compatible uniformity on $X$.
\end{defi}

For a topological group $X$ denote by $\mathcal U_l,\mathcal U_r, \mathcal U_{l\vee r}$ the left, right and two-sided uniform structures on $X$, respectively.
We give here some simple but useful facts for further use.

\begin{lemma} \label{prtq}

    Let $G$ be a topological group and $X$ is a uniform space.
    \begin{enumerate}
        \item \label{contOrbMaps}
        If  $\pi \co G \times X \to X$ is a $\pi$-uniform action and all orbit maps
         ${\pi_x \co G \to X}$ are continuous, then $\pi$ is continuous.

        \item \textnormal{\cite[Theorem 1.2]{me-sing}} If $X$ is a topological group and  ${\pi\co G \times X \to X}$ is an action by continuous automorphisms, then the action is $\pi$-uniform with respect to $\mathcal U \in \{\mathcal U_l,\mathcal U_r, \mathcal U_{l\vee r} \}$ if and only if $\pi$ is continuous at $(e_G,e_X)$.
    \end{enumerate}
\end{lemma}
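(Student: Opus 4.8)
The plan is to deduce part (1) directly from the two hypotheses by a standard three-term entourage estimate; part (2) is exactly \cite[Theorem 1.2]{me-sing}, so it needs no separate argument here.

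For part (1), fix an arbitrary point $(g_0,x_0)\in G\times X$ and an entourage $\varepsilon\in\mathcal U$, and choose a symmetric $\varepsilon_1\in\mathcal U$ with $\varepsilon_1\circ\varepsilon_1\subseteq\varepsilon$. Since the action is $\pi$-uniform, the definition applied at the point $g_0$ to the entourage $\varepsilon_1$ produces $\delta\in\mathcal U$ and $U_1\in N_{g_0}(\tau)$ with $(gx,gy)\in\varepsilon_1$ whenever $(x,y)\in\delta$ and $g\in U_1$. Since the orbit map $\pi_{x_0}\co G\to X$ is continuous, there is $U_2\in N_{g_0}(\tau)$ with $(gx_0,g_0x_0)\in\varepsilon_1$ for every $g\in U_2$. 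Put $U:=U_1\cap U_2\in N_{g_0}(\tau)$. Then for $g\in U$ and any $x$ with $(x,x_0)\in\delta$ we have $(gx,gx_0)\in\varepsilon_1$ (by the choice of $U_1$ and $\delta$) and $(gx_0,g_0x_0)\in\varepsilon_1$ (by the choice of $U_2$), so $(gx,g_0x_0)\in\varepsilon_1\circ\varepsilon_1\subseteq\varepsilon$. Since $\delta[x_0]:=\{x\in X\co (x,x_0)\in\delta\}$ is a neighborhood of $x_0$ and the sets $\varepsilon[g_0x_0]$ form a neighborhood base at $g_0x_0$ as $\varepsilon$ ranges over $\mathcal U$, this shows that $\pi$ is continuous at $(g_0,x_0)$, and therefore continuous.

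I do not expect a genuine obstacle. The only thing that needs care is the bookkeeping with entourages: one takes $\varepsilon_1$ symmetric with $\varepsilon_1\circ\varepsilon_1\subseteq\varepsilon$ so that the estimate in the $X$-variable coming from $\pi$-uniformity and the estimate in the $G$-variable coming from continuity of the single orbit map $\pi_{x_0}$ can be combined along the intermediate point $gx_0$. Note that joint continuity of $\pi$ is never assumed — it is exactly what is being proved — and that the argument is purely local, so no compactness of $X$ is involved.
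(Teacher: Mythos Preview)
Your argument for part (1) is correct and is exactly the natural one: split $(gx,g_0x_0)$ through the intermediate point $gx_0$, control the first half by $\pi$-uniformity at $g_0$ and the second by continuity of the single orbit map $\pi_{x_0}$. Your handling of part (2) by deferring to \cite[Theorem 1.2]{me-sing} is also appropriate.

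As for comparison with the paper: the paper does not actually prove this lemma. It introduces it with ``We give here some simple but useful facts for further use'' and then states both parts without argument, citing \cite{me-sing} for (2) and leaving (1) as an unproved assertion. So there is nothing to compare your approach against; you have simply filled in the omitted verification, and what you wrote is the standard proof.
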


\begin{lemma} \label{l:co} \textnormal{\cite[Theorem $2$]{Arens45}}
    Let $X$ be a compact space and let $G$ be a subgroup of  $\H(X)$. If $\tau$ is a  group topology on $G$ such that the action $G \times X \to X$ is continuous, then $\copt \subseteq \tau$.
\end{lemma}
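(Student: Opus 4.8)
The plan is to verify the inclusion $\copt\subseteq\tau$ by checking it on a subbasis of $\copt$. Recall that, since $X$ is compact, the compact-open topology on $\H(X)$ (and hence on its subgroup $G$ with the subspace topology) has as a subbasis the sets
$[K,U]:=\{g\in G \co gK\seq U\}$, where $K$ ranges over the compact subsets of $X$ and $U$ over the open subsets of $X$. Since every $\copt$-open set is a union of finite intersections of such sets, it suffices to show that each $[K,U]$ is $\tau$-open.

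First I would fix a compact $K\seq X$, an open $U\seq X$, and a point $g_0\in[K,U]$, so that $g_0K\seq U$. For every $x\in K$ we have $\pi(g_0,x)=g_0x\in U$, where $\pi\co (G,\tau)\times X\to X$ denotes the action, which is continuous by hypothesis. Continuity of $\pi$ at the point $(g_0,x)$ provides $V_x\in N_{g_0}(\tau)$ and an open neighborhood $O_x$ of $x$ in $X$ with $\pi(V_x\times O_x)\seq U$. The family $\{O_x\co x\in K\}$ is an open cover of the compact set $K$, so there are $x_1,\dots,x_n\in K$ with $K\seq\bigcup_{i=1}^n O_{x_i}$. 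I would then set $V:=\bigcap_{i=1}^n V_{x_i}\in N_{g_0}(\tau)$.

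The claim is that $V\seq[K,U]$, which finishes the proof, as it exhibits $[K,U]$ as a $\tau$-neighborhood of each of its points and hence as a $\tau$-open set. Indeed, let $g\in V$ and $y\in K$. Choose $i$ with $y\in O_{x_i}$; since $g\in V\seq V_{x_i}$, we get $gy=\pi(g,y)\in\pi(V_{x_i}\times O_{x_i})\seq U$. As $y\in K$ was arbitrary, $gK\seq U$, i.e. $g\in[K,U]$.

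The argument is entirely soft; there is no serious obstacle. The only point requiring care is the passage from the pointwise continuity of $\pi$ to a single $\tau$-neighborhood $V$ of $g_0$ that works uniformly over all of $K$, which is precisely where the compactness of $X$ (equivalently, of $K$) enters. One could alternatively phrase the same computation in terms of the entourages of the unique uniformity of $X$, using that for compact $X$ the compact-open topology coincides with the topology of uniform convergence and hence has the sets $\{g\in G\co (x,gx)\in\varepsilon\ \forall x\in X\}$ as a base at the identity; the two presentations are equivalent and of comparable length.
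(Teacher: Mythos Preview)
Your proof is correct and is essentially the classical argument of Arens. Note that the paper does not supply its own proof of this lemma; it simply cites \cite[Theorem~2]{Arens45}, and what you have written is precisely (a clean rendition of) that standard compactness/subbasis argument.
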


\subsection{Ordered Spaces}

By \textit{order} we mean a reflexive, antisymmetric and transitive relation.

\begin{defi} \textnormal{(Nachbin \cite{Nachbin})}
    A \textit{topological ordered space} is a triple $(X,\leq, \tau)$ where $X$ is a set, $\leq$ is an order on $X$, $\tau$ is a topology on $X$  and the graph of the order  $Gr(\leq)=\{(x,y) : x\leq y \}$ is closed in $X\times X$.
    In particular, a \textit{compact topological ordered space} is a topological ordered space that is compact. Since in this paper all ordered spaces are topological, we will sometimes omit the term "topological".
\end{defi}

\begin{remark} \label{r:part}
    Every Hausdorff topological space $X$ is a topological ordered space
    with respect to the trivial order (equality). Indeed, the diagonal is closed in $X \times X$  exactly when $X$ is Hausdorff.
\end{remark}

A subset $Y \subseteq X$ is said to be \textit{decreasing} if $x \leq y\in Y$ implies $x \in Y$. Similarly one defines an \textit{increasing} subset.

\begin{lemma} \label{disjNbhd} \textnormal{\cite[Prop. 1]{Nachbin}}
    Let $(X, \leq)$ be an ordered set and let $\tau$ be a topology on $X$. The following conditions are equivalent:
    \begin{enumerate}
        \item $(X,\leq, \tau)$ is a topological ordered space (that is, $Gr(\leq)$ is $\tau$-closed in $X \times X$);
        \item if $x \leq y$ is false, then there exist: an increasing neighborhood $W$ of $x$ and a decreasing neighborhood $V$ of $y$ such that $V \cap W = \emptyset$.
    \end{enumerate}
\end{lemma}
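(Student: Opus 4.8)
The plan is to prove the two implications by directly unwinding the definitions of a closed graph and of increasing/decreasing sets, using throughout that a \emph{neighborhood} of a point need not be open (it only has to contain an open set around the point). I would start with $(2)\Rightarrow(1)$. Let $(x,y)\in X\times X$ with $x\leq y$ false; applying $(2)$, pick an increasing neighborhood $W$ of $x$ and a decreasing neighborhood $V$ of $y$ with $W\cap V=\emptyset$. Then $(W\times V)\cap Gr(\leq)=\emptyset$: if $a\in W$, $b\in V$ and $a\leq b$, then $b\in W$ because $W$ is increasing, so $b\in W\cap V$, a contradiction. Choosing open sets $W_0,V_0$ with $x\in W_0\subseteq W$ and $y\in V_0\subseteq V$, the set $W_0\times V_0$ is an open neighborhood of $(x,y)$ in $X\times X$ missing $Gr(\leq)$; hence $(X\times X)\setminus Gr(\leq)$ is open, i.e.\ $Gr(\leq)$ is $\tau$-closed.

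For $(1)\Rightarrow(2)$, assume $Gr(\leq)$ is closed and let $x\leq y$ be false. Then $(x,y)$ lies in the open set $(X\times X)\setminus Gr(\leq)$, so there are open sets $A,B$ with $x\in A$, $y\in B$ and $(A\times B)\cap Gr(\leq)=\emptyset$; equivalently, $a\leq b$ fails for all $a\in A$ and $b\in B$. Let $W:=\{z\in X\mid a\leq z\ \text{for some}\ a\in A\}$ be the increasing hull of $A$, and $V:=\{z\in X\mid z\leq b\ \text{for some}\ b\in B\}$ the decreasing hull of $B$. By construction $W$ is increasing and $V$ is decreasing; since $A\subseteq W$ and $B\subseteq V$ with $A,B$ open, $W$ is a neighborhood of $x$ and $V$ is a neighborhood of $y$. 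Finally $W\cap V=\emptyset$: a common point $z$ would give $a\leq z$ and $z\leq b$ for some $a\in A$, $b\in B$, hence $a\leq b$ by transitivity, contradicting the choice of $A,B$. This yields $(2)$.

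The one point that deserves care, and the step I would flag as the mild obstacle, is the construction in $(1)\Rightarrow(2)$: one should \emph{not} try to produce open increasing/decreasing neighborhoods, since the increasing hull of an open set is in general not open (simple finite examples show this even when $Gr(\leq)$ is closed). The clean route is to observe that the increasing and decreasing hulls of $A$ and $B$ already qualify as neighborhoods of $x$ and $y$ and are automatically disjoint once $A\times B$ misses $Gr(\leq)$. Beyond this, the argument uses only reflexivity and transitivity of $\leq$; antisymmetry plays no role, and no separation hypothesis on $X$ is needed.
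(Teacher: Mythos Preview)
Your proof is correct. Note, however, that the paper does not actually prove this lemma: it is stated with a citation to Nachbin \cite[Prop.~1]{Nachbin} and no argument is given. Your write-up therefore supplies what the paper omits, and the argument you give is essentially the standard one found in Nachbin's book: for $(1)\Rightarrow(2)$ one takes a basic open box $A\times B$ in the complement of $Gr(\leq)$ and passes to the increasing hull of $A$ and the decreasing hull of $B$, exactly as you do. Your caveat that these hulls need not be open (so that ``neighborhood'' must be read in the broad sense) is well placed and is precisely why the statement is phrased with neighborhoods rather than open sets.
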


\begin{lemma} \label{l:closed}
    Let $(X,\leq, \tau)$ be a compact partially ordered space.
    Denote by $\H_+(X)$ the group of all order-preserving homeomorphisms of $X$. Then $\H_+(X)$ is a closed subgroup of the topological group $\H(X)$.
\end{lemma}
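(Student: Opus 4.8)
The plan is to prove the slightly cleaner statement that $S:=\{f\in\H(X)\ :\ x\leq y \Rightarrow f(x)\leq f(y)\}$, the set of all order-preserving self-homeomorphisms of $X$, is closed in $(\H(X),\copt)$. This gives the lemma: since $\H(X)$ is a topological group (as $X$ is compact), the inversion map $j\co\H(X)\to\H(X)$, $j(f)=f^{-1}$, is a homeomorphism, hence $j(S)$ is closed as well, and $\H_+(X)=S\cap j(S)$ is therefore a closed subgroup of $\H(X)$.

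To see that $S$ is closed, fix $f$ in the $\copt$-closure of $S$ and suppose, towards a contradiction, that $f\notin S$. Then there are $x\leq y$ in $X$ with $f(x)\leq f(y)$ false. Since $(X,\leq,\tau)$ is a compact partially ordered space, Lemma \ref{disjNbhd} applies and yields an increasing open neighborhood $W$ of $f(x)$ and a decreasing open neighborhood $V$ of $f(y)$ with $W\cap V=\emps$.

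Now consider $\mathcal{O}:=\{g\in\H(X)\ :\ g(x)\in W\ \text{and}\ g(y)\in V\}$. Being the intersection of the two subbasic compact-open sets determined by the compact singletons $\{x\},\{y\}$ and the open sets $W,V$, the set $\mathcal{O}$ is $\copt$-open, and $f\in\mathcal{O}$. As $f$ lies in the closure of $S$, we may pick some $g\in S\cap\mathcal{O}$. Then $g$ is order-preserving and $x\leq y$, so $g(x)\leq g(y)$; since $W$ is increasing and $g(x)\in W$, this forces $g(y)\in W$, contradicting $g(y)\in V$ and $W\cap V=\emps$. Hence $f\in S$, so $S$ is closed.

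The argument involves no serious obstacle; the only point requiring care is that for a genuinely partial (as opposed to linear) order one must invoke Nachbin's separation property (Lemma \ref{disjNbhd}) rather than a naive order-interval argument, and that the passage from "$f$ order-preserving" to "$f\in\H_+(X)$" is handled by intersecting with $j(S)$ as above — equivalently, by applying the same argument to $f^{-1}$, which also lies in the closure of $S$.
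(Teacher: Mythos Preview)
Your proof is correct and is essentially the paper's argument spelled out in more detail: the paper's one-line proof simply observes that since $Gr(\leq)$ is closed in $X\times X$, the subgroup $\H_+(X)$ is even pointwise closed in $\H(X)$. Your neighborhood $\mathcal{O}$ is already open in the topology of pointwise convergence, so you have in fact established exactly this stronger closedness; the detour through Lemma~\ref{disjNbhd} and the $S\cap j(S)$ decomposition are sound but unnecessary refinements.
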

\begin{proof} Since $Gr(\leq)$ is $\tau$-closed in $X \times X$, the subgroup $\H_+(X)$ is even pointwise closed in $\H(X)$.
\end{proof}

\subsection{Limit points and ultrafilters}

All definitions and results of this subsection can be found, for example, in \cite[Chapter $1$, Section $7$]{Bourbaki}.
Let $X$ be a topological space and $\mathcal{J}$ is a filter on $X$. A point $x\in X$ is said to be a \textit{limit point of a filter} $\mathcal{J}$, if $\mathcal{J}$ is finer than the neighborhood filter $\mathcal{N}_x$ of $x$. We also say that $\mathcal{J}$ is convergent to $x$. A point $x$ is called a \textit{limit point of a filter base} $\mathcal{B}$ on $X$, if the filter whose base is $\mathcal{B}$ converges to $x$.
Let $f$ be a mapping from a set $X$ to a topological space $Y$, and let $\mathcal{J}$ be a filter on $X$. A point $y \in Y$ is a \textit{limit point of $f$ with respect to the filter $\mathcal{J}$} if $y$ is a limit point of the filter base $f(\mathcal{J})$.

\begin{prop}\textnormal{\cite{Bourbaki}}
    \begin{enumerate}
        \item
        If $\mathcal{B}$ is an ultrafilter base on a set $X$ and if $f$ is a mapping from $X$ to $Y$, then $f(\mathcal{B})$ is an ultrafilter base on $Y$.
        \item  Let $f$ be a mapping from a set $X$ into a topological space $Y$, and let $\mathcal{J}$ be a filter on $X$. A point $y\in Y$ is a limit point of $f$ with respect to the filter $\mathcal{J}$ if and only if  $f^{-1}(V) \in \mathcal{J}$ for each neighborhood $V$ of $y$ in $Y$.
        \item  If $X$ is a compact Hausdorff space, then every ultrafilter on $X$ converges to a unique point.
    \end{enumerate}
\end{prop}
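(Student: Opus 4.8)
The plan is to treat the three items in turn; each is a routine exercise in the calculus of filters, so I anticipate no real obstacle, the only thing requiring attention being the passage between a filter base and the filter it generates. I will freely use two standard facts: a filter base $\mathcal{B}$ on a set $S$ generates an ultrafilter on $S$ if and only if for every $A \subseteq S$ some $B \in \mathcal{B}$ satisfies $B \subseteq A$ or $B \subseteq S \setminus A$; and a filter is upward closed and stable under finite intersections.

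For item (1), I would first observe that $f(\mathcal{B})$ is a filter base on $Y$: it contains no empty set because $f$ sends nonempty sets to nonempty sets, and given $B_1, B_2 \in \mathcal{B}$ one picks $B_3 \in \mathcal{B}$ with $B_3 \subseteq B_1 \cap B_2$ and notes $f(B_3) \subseteq f(B_1) \cap f(B_2)$. To see that it is an ultrafilter base, fix $A \subseteq Y$ and apply the ultrafilter-base property of $\mathcal{B}$ to the set $f^{-1}(A)$: there is $B \in \mathcal{B}$ with $B \subseteq f^{-1}(A)$, giving $f(B) \subseteq A$, or $B \subseteq X \setminus f^{-1}(A) = f^{-1}(Y \setminus A)$, giving $f(B) \subseteq Y \setminus A$.

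Item (2) is immediate once the definitions are unwound: $y$ being a limit point of $f$ with respect to $\mathcal{J}$ means that the filter base $f(\mathcal{J})$ converges to $y$, i.e. for every neighborhood $V$ of $y$ there is $J \in \mathcal{J}$ with $f(J) \subseteq V$. Since $f(J) \subseteq V$ is the same as $J \subseteq f^{-1}(V)$, and $\mathcal{J}$ is upward closed, the existence of such a $J$ is exactly the condition $f^{-1}(V) \in \mathcal{J}$.

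For item (3) I would argue existence and uniqueness of the limit separately. If an ultrafilter $\mathcal{U}$ on the compact space $X$ had no limit point, then every $x \in X$ would have an open neighborhood $V_x \notin \mathcal{U}$, hence $X \setminus V_x \in \mathcal{U}$; extracting a finite subcover $V_{x_1}, \dots, V_{x_n}$ of $X$ and intersecting the complements yields $\emptyset \in \mathcal{U}$, a contradiction. If $\mathcal{U}$ converged to two distinct points $x$ and $y$, Hausdorffness provides disjoint open neighborhoods $U$ of $x$ and $W$ of $y$, both of which would then lie in $\mathcal{U}$, whence $\emptyset = U \cap W \in \mathcal{U}$. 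The mild point to keep in mind here is that ``converges to $x$'' must be read as ``$\mathcal{U}$ refines the neighborhood filter of $x$'' and that compactness is used in its open-cover formulation; beyond that nothing is delicate.
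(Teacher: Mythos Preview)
Your argument is correct in all three parts. The paper itself does not supply a proof of this proposition: it is stated with a citation to Bourbaki and used as a black box to derive Corollary~2.11, so there is no ``paper's own proof'' to compare against beyond noting that you have filled in what the authors left as a reference.
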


We can sum these propositions as follows:
\begin{corol} \label{limitpoint}
    Let $\mathcal{J}$ be an ultrafilter on a set $E$ and let $f$ be a mapping from $E$ to a compact space $X$. Then there exists a unique point $\widetilde{x} \in X$ such that each neighborhood $O$ of $\widetilde{x}$ satisfies $f^{-1}(O) \in \mathcal{J}$. That is, $\widetilde{x}$ is the limit point of $f$ with respect to $\mathcal{J}$.
\end{corol}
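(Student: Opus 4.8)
The plan is to assemble the three parts of the preceding Proposition. Since an ultrafilter on $E$ is in particular an ultrafilter base, part $(1)$ shows that $f(\mathcal{J})$ is an ultrafilter base on $X$; the filter it generates is therefore an ultrafilter on $X$. As $X$ is compact Hausdorff, part $(3)$ provides a \emph{unique} point $\widetilde{x}\in X$ that is a limit point of this ultrafilter — equivalently, a limit point of the filter base $f(\mathcal{J})$, which by the definitions recalled above is exactly the limit point of $f$ with respect to $\mathcal{J}$.

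It remains to reformulate this in terms of preimages. By part $(2)$, the point $\widetilde{x}$ is a limit point of $f$ with respect to $\mathcal{J}$ if and only if $f^{-1}(O)\in\mathcal{J}$ for every neighborhood $O$ of $\widetilde{x}$; hence the $\widetilde{x}$ produced above has the asserted property. For uniqueness, if $\widetilde{x}_1$ and $\widetilde{x}_2$ both satisfy this preimage condition, then by part $(2)$ both are limit points of $f$ with respect to $\mathcal{J}$, and the uniqueness clause of part $(3)$ gives $\widetilde{x}_1=\widetilde{x}_2$. One could equally argue directly: distinct points would have disjoint neighborhoods $O_1,O_2$ by Hausdorffness, whence $f^{-1}(O_1)$ and $f^{-1}(O_2)$ would be disjoint members of the filter $\mathcal{J}$, which is impossible.

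There is essentially no obstacle here, since the statement is an immediate consequence of facts already quoted; the only point demanding a little care is the bookkeeping between an ultrafilter base, the filter it generates, and the two (coinciding) notions of ``limit point of $f$ with respect to $\mathcal{J}$'' versus ``limit point of the filter base $f(\mathcal{J})$''.
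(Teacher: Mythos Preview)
Your proposal is correct and is exactly the intended argument: the paper itself offers no separate proof, presenting the corollary with the phrase ``We can sum these propositions as follows,'' so the entire content is precisely the assembly of parts (1)--(3) that you carry out. Your careful tracking of the passage from ultrafilter to ultrafilter base to generated ultrafilter, and the translation via part~(2) into the preimage condition, is just the bookkeeping that the paper leaves implicit.
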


\section{$\pi$-uniform topologies on $\H_+$ and $\Aut_+$}

The following theorem is an extended version of a result of Gamarnik \cite[Prop. 2.1]{Gam}.

\begin{thm} \label{not_pi_uniform_LOTS}
    Let $(X,\tau_{\leq})$ be a compact partially ordered space
    and 
    let $P$ be a closed subgroup of $\H_+(X),$  the group of all order-preserving homeomorphisms of $X$. Then the compact-open topology $\tau_{co}$ is minimal
    within the class of $\pi$-uniform topologies on $P.$
\end{thm}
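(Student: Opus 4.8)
The plan is to show that every $\pi$-uniform Hausdorff group topology $\tau_1$ on $P$ with $\tau_1\subseteq\tau_{co}$ must coincide with $\tau_{co}$. Since one inclusion is assumed, by Lemma~\ref{l:co} it is enough to prove that the natural action $(P,\tau_1)\times X\to X$ is continuous, and, because $\tau_1$ is $\pi$-uniform, the first part of Lemma~\ref{prtq} reduces this to showing that every orbit map $\pi_x\colon(P,\tau_1)\to X$ is continuous. A routine translation argument — left multiplication in $P$ is a $\tau_1$-homeomorphism and each $g_0\in P$ is a homeomorphism of $X$ — reduces continuity of $\pi_x$ to continuity at the identity $e$. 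Passing to ultrafilters with the help of the compactness of $X$ (Corollary~\ref{limitpoint}), this amounts to the following statement, which I take as the goal: for every ultrafilter $\mathcal K$ on $P$ converging to $e$ in $\tau_1$ and every $x\in X$, the limit point of $\pi_x$ along $\mathcal K$ equals $x$.

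Fix such a $\mathcal K$ and define $\Phi\colon X\to X$ by letting $\Phi(x)$ be the (unique) limit point of $\pi_x$ along $\mathcal K$; similarly define $\Psi\colon X\to X$ from the ultrafilter $\mathcal K^{-1}$, the image of $\mathcal K$ under inversion, which again converges to $e$ in $\tau_1$, so that $\Psi(x)=\lim_{\mathcal K}g^{-1}x$. Two facts are immediate. First, $\Phi$ and $\Psi$ are uniformly continuous: given a closed entourage $\varepsilon$ of $X$, quasiboundedness of the action yields $\delta$ and $U\in N_e(\tau_1)$ with $(gx,gy)\in\varepsilon$ whenever $(x,y)\in\delta$ and $g\in U$; since $U\in\mathcal K$ and $\varepsilon$ is closed, passing to the limit along $\mathcal K$ gives $(\Phi x,\Phi y)\in\varepsilon$ for all $(x,y)\in\delta$. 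Second, $\Phi$ and $\Psi$ are order-preserving: if $x\leq y$ then $(gx,gy)\in Gr(\leq)$ for every $g\in P$, and $Gr(\leq)$ is closed, so $(\Phi x,\Phi y)\in Gr(\leq)$.

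The core of the proof is to recognise $\Phi$ as an element of $(P,\tau_{co})$ to which $\mathcal K$ converges. Quasiboundedness again supplies, for each closed entourage $\varepsilon$, a $\mathcal K$-large set $U$ on which $\{g\}$ is uniformly equicontinuous; combining this with the pointwise convergence $gx\to\Phi(x)$ and a finite-subcover argument on the compact space $X$, one shows that $\mathcal K$ converges to $\Phi$ uniformly on $X$ (i.e. $\{g\in P:(gx,\Phi x)\in\varepsilon\text{ for all }x\in X\}\in\mathcal K$ for every $\varepsilon$), and likewise $\mathcal K^{-1}$ converges to $\Psi$ uniformly. A short estimate using the same equicontinuous tails shows $\Phi\circ\Psi=\Psi\circ\Phi=\id_X$ — for instance, if $g$ ranges over a suitable member of $\mathcal K$ then $\bigl(x,g^{-1}\Phi(x)\bigr)=\bigl(g^{-1}(gx),g^{-1}\Phi(x)\bigr)$ is $\varepsilon$-small because $gx$ is $\delta$-close to $\Phi(x)$ and $g^{-1}$ lies in an equicontinuous tail — so $\Phi$ is a homeomorphism with inverse $\Psi$, and together with the previous paragraph $\Phi\in\H_+(X)$. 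Since $\mathcal K\to\Phi$ and $\mathcal K^{-1}\to\Phi^{-1}$ uniformly, $\mathcal K$ converges to $\Phi$ in $\tau_{co}$; and since $P$ is $\tau_{co}$-closed in $\H(X)$ (it is closed in $\H_+(X)$ by hypothesis, and $\H_+(X)$ is closed in $\H(X)$ by Lemma~\ref{l:closed}), it follows that $\Phi\in P$. Finally, $\tau_1\subseteq\tau_{co}$ forces $\mathcal K\to\Phi$ in $\tau_1$ as well, while $\mathcal K\to e$ in $\tau_1$; Hausdorffness of $\tau_1$ then gives $\Phi=\id_X$, which is exactly the goal isolated above.

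I expect the main obstacle to be precisely the third paragraph: the maps $\Phi$ and $\Psi$ are at first only pointwise limits, and promoting them to an honest homeomorphism belonging to $P$, while arranging that $\mathcal K$ genuinely $\tau_{co}$-converges to it, is where all the hypotheses must interact. Compactness of $X$ is used both for the existence of the pointwise limits and for the finite-subcover step upgrading pointwise to uniform convergence, quasiboundedness ($\pi$-uniformity of $\tau_1$) supplies the uniform equicontinuity of a $\mathcal K$-large tail needed for that upgrade and for the invertibility of $\Phi$, closedness of $Gr(\leq)$ keeps $\Phi$ inside $\H_+(X)$, and closedness of $P$ yields the final clash with Hausdorffness. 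By contrast, the reduction via Lemmas~\ref{l:co} and \ref{prtq}, the translation to continuity at $e$, and the two closedness arguments in the second paragraph are routine.
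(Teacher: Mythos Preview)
Your argument is correct and follows the same overall strategy as the paper: reduce via Lemmas~\ref{prtq} and~\ref{l:co} to continuity of the orbit maps at $e$, produce a limit map from an ultrafilter, use $\pi$-uniformity to show it is a homeomorphism lying in $P$, and finish by Hausdorffness of $\tau_1$. The execution is organised somewhat differently and is in places cleaner. You take the ultrafilter on $P$ itself, whereas the paper works with an ultrafilter on the index set $N_e(\tau)$ together with a choice $g_U\in U$ for each $U$; you build the inverse $\Psi$ from $\mathcal K^{-1}$ and verify $\Phi\Psi=\Psi\Phi=\id$ directly, which replaces the paper's separate injectivity and surjectivity arguments (Steps~1 and~2 of Claim~\ref{h}); and your endgame is shorter: once $\mathcal K\to\Phi$ in $\tau_{co}$, the inclusion $\tau_1\subseteq\tau_{co}$ gives $\mathcal K\to\Phi$ in $\tau_1$, and uniqueness of limits forces $\Phi=e$, whereas the paper reaches the analogous conclusion through a second explicit estimate (Claim~\ref{notHaus}) showing $h\in[\tilde{\varepsilon}^3(e)]^{-1}U_0$ for all $\varepsilon$ and $U_0$. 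Both proofs use the hypotheses in exactly the same places; yours simply avoids some duplication.
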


\begin{proof}
    Assuming the contrary, suppose that there exists a
    $\pi$-uniform group topology $\tau$ on $P$ such that
    $ \tau \varsubsetneqq\copt$. Let $\pi \co P\times X \to X$ be the natural action of  $P$ on $X$. If all orbit maps are continuous, then, by Lemma \ref{prtq}.1, $\pi$ is continuous and, by Lemma \ref{l:co}, $\copt \seq \tau$.
    So we can assume that there exists an orbit map that is not continuous (at the identity). That is, there exists $x_0 \in X$ such that $\pi_{x_0}\co P \to X$ is not continuous at $e \in P$. Thus, denoting by $\mathcal U$ the natural uniformity on $X$, there exists $\eps_0 \in \mathcal U$ such that for all $U \in N_e(\tau)$ there exists $g_U \in U$ for which
    \begin{equation} \label{gUU}
        (g_Ux_0, x_0) \notin \eps_0.
    \end{equation}

    For a given $U \in N_e (\tau)$ define $F(U)=\{V \in N_e(\tau) : V \seq U \}$.
    Denote by $\mathcal F$ the filter on the set $N_e(\tau)$ generated by the filter base $\{F(U)\} _{U\in N_e(\tau)}$. Since every filter is contained in an ultrafilter, choose an ultrafilter $\mathcal{J}$ on $ N_e (\tau)$ that contains $\mathcal F$.

    For each $x \in X$ define a map $f_x \co N_e(\tau) \to X$  by $f_x(U)= g_Ux$ for $g_U$ that satisfies (\ref{gUU}). Let $\widetilde{x}$ be the limit point of $f_x$ with respect to the ultrafilter $\mathcal{J}$ given by Corollary \ref{limitpoint}. Define the following transformation

    \begin{equation} \label{the_map_h}
        h \co X \to X , \ \ h(x)=\widetilde{x}.
    \end{equation}

    In the rest of the proof we show that $h$ is a nontrivial order-preserving homeomorphism that belongs to every neighborhood of the identity element in $(P,\tau)$, in contradiction to $\tau$ being a Hausdorff group topology.

    \begin{claim} \label{h}
        The map $h$ defined by \textup{(\ref{the_map_h})} is a nontrivial homeomorphism in $P$.
    \end{claim}
    \begin{proof}
        We break the proof into five steps. \vspace{0.2cm}

        Step $\bf{1}$. In order to prove that $h$ is one-to-one, assume for a  contradiction that there exist $x,y,z \in X$ such that $h(x)=h(y)=z$ and $x\neq y$. Choose an entourage  $\eps \in \mathcal U$ such that $(x,y) \notin \eps$. The action is $\pi$-uniform at the identity, and thus there exist $U_\eps \in N_e(\tau)$ and $\delta_\eps \in \mathcal U$ such that $(gx,gy) \in \eps$ for every $(x,y) \in \delta_\eps$ and $g\in U_\eps$. Choose a symmetric $\delta \in \mathcal U$ satisfying $\delta^2 \seq \delta_\eps$.

        By assumption $z=h(x)$ is the limit point of $f_x$ with respect to $\mathcal{J}$. That is, for every entourage in the uniformity, and in particular for $\delta$, we have:
        $$A(x,\delta)=\{U \in N_e(\tau) : \ (g_Ux,z) \in \delta \} \in \mathcal{J}.$$ Similarly,
        $$A(y,\delta)=\{U \in N_e(\tau) : \ (g_Uy,z) \in \delta \} \in \mathcal{J}.$$ Also, since $F(U_\eps^{-1}) \in \mathcal{J}$, the intersection $A(x,\delta) \cap A(y,\delta) \cap F(U_\eps^{-1})$ is not empty. If $U_0 \in A(x,\delta) \cap A(y,\delta) \cap F(U_\eps^{-1})$ and $g_{U_0} \in U_0$, then $g_{U_0} \in U_\eps^{-1}$ (and thus $g_{U_0}^{-1} \in U_\eps$), $(g_{U_0}x,z) \in \delta$, $(g_{U_0}y,z) \in \delta$ (and thus $(g_{U_0}x,g_{U_0}y) \in \delta^2 \seq \delta_\eps$). By the choice of $\delta_{\eps}$ and $U_{\eps}$ we have
        $(g_{U_0}^{-1}g_{U_0}x,g_{U_0}^{-1}g_{U_0}y)=(x,y) \in \eps$, and this contradicts the choice of $\eps$. Therefore $h$ is one-to-one.

        \vspace{0.2cm}

        Step $\bf 2$. To prove that $h$ is onto, for a given $y \in X$ we will find $x \in X$ such that $h(x)=y$. Fix $y\in X$ and consider the map $N_e(\tau) \to X$ given by $U \mapsto g_U^{-1}y$. Let $x$ be the limit point of this map with respect to the ultrafilter $\mathcal{J}$. To show that $h(x)=y$ we will show that $y$ is the limit point of $f_x$ with respect to $\mathcal{J}$. Let $\eps \in \mathcal U$ be an arbitrary entourage and choose $U_\eps$, $\delta_\eps$ from the definition of $\pi$-uniform topology. Since $x$ is defined as the limit point of $g_U^{-1}y$, we know that $$B(y,\delta_{\eps})=\{U \in N_e(\tau) : \ (g_U^{-1}y,x) \in \delta_\eps \} \in \mathcal{J},$$ and since $F(U_\eps)$ is also an element of $\mathcal{J}$,  the intersection $B(y,\delta_{\eps}) \cap F(U_\eps)$ is not empty. Let $U \in B(y,\delta_{\eps}) \cap F(U_\eps)$. Then for $g_U \in U \seq U_\eps$ and $(g_U^{-1}y,x)\in \delta_\eps$ we have $(y,g_Ux)\in \eps$ (by the choice of $U_\eps,\delta_\eps$). This last condition is satisfied by all $U\in B(y,\delta_{\eps}) \cap F(U_\eps)$ and, therefore, $$\{U\in N_e(\tau)  :  (y,g_Ux) \in \eps \} \in \mathcal{J}$$ (since $B(y,\delta_{\eps}) \cap F(U_\eps) \in \mathcal{J}$ and $B(y,\delta_{\eps}) \cap F(U_\eps) \seq \{U\in N_e(\tau)  :  (y,g_Ux) \in \eps \}$.) This holds for all $\eps \in \mathcal U$, which proves that $y$ is the limit point of $f_x = g_Ux$ with respect to $\mathcal{J}$. And that, in turn, proves that $h(x)=y$ and therefore $h$ is onto. \vspace{0.3cm}

        \vspace{0.2cm}

        Step $\bf 3$. In order to prove that $h$ is (uniformly) continuous we will show that for every $\varepsilon \in \mathcal U$ there exists $\delta \in \mathcal U$ such that $(h(x),h(y))\in \varepsilon$ for all $(x,y)\in \delta$.  Let $\eps_0 \in \mathcal U$ and choose
        a symmetric entourage $\eps \in \mathcal U$ such that $\eps^3 \seq \eps_0$. Choose $\delta_\eps,U_\eps$ from Definition \ref{d:pi} of  $\pi$-uniformity. We will show that if $(x,y) \in \delta_\eps$ then $(h(x),h(y)) \in \eps_0$. Let $(x,y) \in \delta_\eps$ and assume for a contradiction that $(h(x),h(y)) \notin \eps_0$. This means that if $t_1,t_2$ satisfy $(h(x),t_1)\in \eps \ , (h(y),t_2) \in \eps$, then (since $\eps^3 \seq \eps_0$) we have
        \begin{equation} \label{t''}
            (t_1,t_2) \notin \eps.
        \end{equation}
        Since $h(x)$ is the limit point of $g_Ux$, $A(x,\eps)=\{U \in N_e(\tau) : (g_Ux,h(x)) \in \eps \} \in \mathcal{J}$ and, similarly, $A(y,\eps)=\{U \in N_e(\tau) : (g_Uy,h(y)) \in \eps \} \in \mathcal{J}$. Also, since $F(U_\eps) \in \mathcal{J}$, the intersection $A(x,\eps) \cap A(y,\eps) \cap F(U_\eps)$ is not empty. Let $V \in A(x,\eps) \cap A(y,\eps) \cap F(U_\eps)$. In particular $(g_Vx,h(x)) \in \eps$ and $(g_Vy,h(y)) \in \eps$. Next, from (\ref{t''}) it follows that $(g_Vx,g_Vy) \notin \eps$. But $V\seq U_\eps$ and thus $g_V \in U_\eps$. Since $(x,y) \in \delta_\eps$ we know, by the definition of $\pi$-uniformity, that $(g_Vx,g_Vy)\in \varepsilon$ and this is the desired contradiction.
        \vspace{0.2cm}

        Step $\bf 4$. To see that $h$ is not trivial, recall that from (\ref{gUU}) we have $x_0 \in X$ and $\eps_0 \in \mathcal U$ such that $(g_Ux_0,x_0) \notin \eps_0$ for every $U \in N_e(\tau)$. This implies that $h(x_0) \neq x_0 $.

        \vspace{0.2cm}

        Step $\bf 5$.
        Finally, we show that  $h\in P.$ Denote by $A$ the set of all $g_U$ that satisfy  (\ref{gUU}), that is $A=\{g_U:U\in N_e(\tau)\}$. Since $P$ is closed, we have $A\seq \overline{A} \seq P$, where $\overline{A}$ is the closure of $A$ in $\H_+(X)$ with respect to the compact-open topology $\tau_{co}$.

        We are going to show that $h\in \overline{A}$. That is, we need to show that every neighborhood of $h$ contains some $g_U$. 	Since $X$ is compact,
        $\tau_{co}$ coincides with the topology of uniform convergence, and hence for $\varepsilon \in \mathcal U$ a basic neighborhood of $h$ is of the form $\widetilde{\varepsilon}(h)=\{f\in P:(f(x),h(x)) \in \varepsilon \ \forall x\in X\}$. Therefore, for every $\eps \in \mathcal U$ we will find $U\in N_e(\tau)$ such that $\forall x\in X : (g_Ux,hx)\in \eps$.

        Fix an $\eps\in \mathcal U$ and choose
        a symmetric entourage
        $\delta\in \mathcal U$ such that $\delta^3\seq \eps$.
        Since $\tau$ is $\pi$-uniform at the identity, for $\delta$ there exist $\eta\in \mathcal U$ and $U_0\in N_e(\tau)$ such that $$(\forall (x,y)\in \eta)(\forall g\in U_0):(gx,gy)\in \delta.$$
        Since $g_U \in U$ we obtain, in particular,
        \begin{equation}\label{gcp1}
            \forall (x,y)\in \eta \ \forall U\subseteq U_0: (g_{U}x,g_{U}y)\in \delta.
        \end{equation}
        Since $h$ is uniformly continuous, for $\delta$ there exists $\kappa\in \mathcal U$ such that
        \begin{equation}\label{gcp2}
            \forall (x,y)\in \kappa: (hx,hy)\in \delta.
        \end{equation}
        If necessary, we intersect $\kappa$ with $\eta$ to ensure that $\kappa\seq \eta$, which we will need later in the proof.

        Now, since $X$ is compact, for $\kappa$ that satisfies (\ref{gcp2}) there exists a finite collection of points $x_1,\dots,x_n \in X$ such that $\kappa[x_1]\cup\dots\cup\kappa[x_n]=X$.

        We will show that	there exists $U_1\seq U_0$ such that for all $i\in \{1,\dots,n\}$
        \begin{equation}\label{gcp3}
            (g_{U_1}x_i,hx_i)\in \delta.
        \end{equation}
        For a fixed index  $i\in \{1,\dots,n\}$, since $hx_i=\widetilde{x_i}$ is the limit of $f_{x_i}$ with respect to $\mathcal{J}$, we have $A(x_i,\delta)=\{U\in N_e(\tau): (g_Ux_i,hx_i)\in \delta   \}\in \mathcal{J}$. Recall that $F(U_0)\in \mathcal{J}$ and thus the intersection $F(U_0)\bigcap (\bigcap_{i=1}^n A(x_i,\delta))$ is not empty. Choose a set $U_1$ from this intersection. Then $U_1\seq U_0$ and for all  ${i\in \{1,\dots,n\}}$ we have $(g_{U_1}x_i,hx_i)\in \delta$, as required.
        We claim that $U_1$ is the desired neighborhood. That is, for every $x\in X$ we have $(g_{U_1}x,hx)\in \eps$.
        Indeed, fix an $x\in X$. There exists  $i\in \{1,\dots,n\}$ such that $x\in \kappa [x_i]$. At this point recall that $\kappa \seq \eta$ and $g_{U_1}\in U_1\seq U_0$. Since $(x,x_i)\in \kappa\seq \eta$, by (\ref{gcp1}) we have
        \begin{equation}\label{gcp4}
            (g_{U_1}x,g_{U_1}x_i)\in \delta.
        \end{equation}
        Also,  by (\ref{gcp2}) we have
        \begin{equation}\label{gcp5}
            (hx,hx_i)\in \delta.
        \end{equation}

        Now, combining (\ref{gcp4}), (\ref{gcp3}) and (\ref{gcp5}) we get
        $$(g_{U_1}x,g_{U_1}x_i), (g_{U_1}x_i,hx_i), (hx_i,hx) \in \delta^3.$$

        Therefore, for every $x\in X$ we have $(g_{U_1}x,hx)\in \delta^3\seq \eps$ and thus $h\in \overline{A}$, as required.
    \end{proof}

    The following claim shows that $\tau$ is not Hausdorff.
    \begin{claim} \label{notHaus}
        For every $U \in N_e(\tau)$, $h \in U$.
    \end{claim}
    \begin{proof}
        For  $g \in P$ and  $\eps \in \mathcal U$ define $$\tilde{\eps}(g) =\{f \in P: (g(x),f(x)) \in \eps  \text{ for all } x \in X \} .$$
        It can be easily verified that $\{\tilde{\eps}(g) \}_{\eps \in \mathcal U} $  is a local base of neighborhoods for every point $g \in P$, with respect to the compact-open topology on $P$. In order to prove the statement, it suffices to show that $h \in [\tilde{\eps}^3(e)]^{-1}U_0$ for every $U_0 \in N_e(\tau)$ and for every $\eps \in \mathcal U$. Indeed, for each $U \in N_e(\tau)$ we can find $U_0 \in N_e(\tau)$ such that $U_0^2 \seq U$ and $U_0^{-1}=U_0$. But $\tau \seq \copt$, and $\{\tilde{\eps}(e)   \}_{ \eps \in \mathcal U}$ is a local base at $e$, thus there exists $\eps \in \mathcal U$ with $\tilde{\eps}^3(e) \seq U_0$.  Therefore $[{\tilde{\eps}}^3(e)]^{-1}U_0 \seq U_0^{-1}U_0 \seq U$.

        Let $\eps \in \mathcal U$ and $U_0\in N_e(\tau)$. Choose $\delta_\eps \in \mathcal U$ and $U_\eps \in N_e(\tau)$ from the definition of $\pi$-uniform topology. For $x \in X$ define $A(h^{-1}(x),\eps)=\{U \in N_e(\tau) : (g_Uh^{-1}(x),x)\in \eps  \}$. Since $h(h^{-1}(x))=x$, from the definition of $h$ we have $A(h^{-1}(x),\eps) \in \mathcal{J}$. Indeed, $x=h(h^{-1}(x))=\widetilde{h^{-1}(x)} $, $x$ is the limit point of the map $f_{h^{-1}(x)}\co N_e(\tau)\to X$ defined by $f_{h^{-1}(x)}(U)=g_Uh^{-1}(x)$. Since $h$ (and thus $h^{-1}$) is uniformly continuous, we can choose $\alpha \in \mathcal U$ such that $\alpha \seq \eps$ and
        \begin{equation} \label{implication}
            (t_1,t_2) \in \alpha \Rightarrow (h^{-1}(t_1),h^{-1}(t_2)) \in \delta_\eps.
        \end{equation}

        Since $X$ is compact, we can find a finite subset $\{x_1,x_2,...,x_n  \} \seq X$ such that for every $x\in X$ there exists $1 \leq i \leq n$ for which  $(x,x_i) \in \alpha$.
        Let
        $$
        U\in \left(\bigcap_{i=1}^n A(h^{-1}(x_i),\eps)\right)\bigcap F\left(U_\eps \cap U_0\right).
        $$
        For every $x\in X$ there exists $i$ such that $(x,x_i) \in \alpha$ and from (\ref{implication}) we have $(h^{-1}(x),h^{-1}(x_i)) \in \delta_\eps$. Since $U \seq U_\eps$, by the choice of $U_\eps$ and $\delta_\eps$ we have $(g_Uh^{-1}(x),g_Uh^{-1}(x_i)) \in \varepsilon$.
        Since $U \in A(h^{-1}(x_i),\eps)$, it follows that $(g_Uh^{-1}(x_i),x_i) \in \eps $. Recalling that $(x,x_i) \in \alpha \seq \eps$ we obtain $(g_Uh^{-1}(x),x) \in \eps^3$, and therefore $g_Uh^{-1} \in \tilde{\eps}^3(e)$. But since $g_U \in U \seq U_0$, we get $h\in [\tilde{\eps}^3(e)]^{-1}U_0$, and this completes the proof.
    \end{proof}
    Claims \ref{h} and \ref{notHaus} complete the proof of Theorem \ref{not_pi_uniform_LOTS}.
\end{proof}

By Remark \ref{r:part}, every Hausdorff topological space can be viewed as an ordered topological space. Therefore, Theorem \ref{not_pi_uniform_LOTS}  directly yields the following:

\begin{thm}  \label{t:usualcase}
    Let $(X,\tau)$ be a compact topological space
    and 
    let $P$ be a closed subgroup of $\H(X),$  the group of all  homeomorphisms of $X$. Then the compact-open topology $\tau_{co}$ is minimal
    within the class of $\pi$-uniform topologies on $P.$
\end{thm}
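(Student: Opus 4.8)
The plan is to deduce Theorem~\ref{t:usualcase} as a direct corollary of Theorem~\ref{not_pi_uniform_LOTS}, using the observation recorded in Remark~\ref{r:part}. First I would equip the given compact topological space $(X,\tau)$ with the trivial order, namely equality: by Remark~\ref{r:part} the diagonal $\Delta = Gr(=)$ is closed in $X\times X$ precisely because $X$ is Hausdorff (which we assume throughout), so $(X,=,\tau)$ is a compact partially ordered space in the sense of Nachbin. Next I would identify $\H_+(X,=)$ with $\H(X)$: a homeomorphism preserves the equality relation automatically, so every element of $\H(X)$ is order-preserving for the trivial order, and conversely; hence $\H_+(X,=) = \H(X)$ as topological groups (the compact-open topology is the same on both sides since the underlying set and the action on $X$ are the same). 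Consequently, a closed subgroup $P$ of $\H(X)$ is the same thing as a closed subgroup of $\H_+(X,=)$.

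With these identifications in place, Theorem~\ref{not_pi_uniform_LOTS} applied to the compact partially ordered space $(X,=,\tau)$ and the closed subgroup $P \subseteq \H_+(X,=) = \H(X)$ states exactly that $\tau_{co}$ is minimal within the class of $\pi$-uniform topologies on $P$, which is the assertion of Theorem~\ref{t:usualcase}. It is worth remarking that the notion of $\pi$-uniform topology from Definition~\ref{d:unif} depends only on $X$ as a compact uniform space and on the natural action $P\times X\to X$, not on any order structure, so there is no ambiguity in the phrase ``$\pi$-uniform topologies on $P$'' between the two formulations.

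I do not expect a genuine obstacle here; the content is entirely in Theorem~\ref{not_pi_uniform_LOTS}, and the step from partial orders to arbitrary Hausdorff compacta is the purely formal device of Remark~\ref{r:part}. The only point requiring a line of care is the verification that $\H_+(X,=) = \H(X)$ and that this identification is compatible with the compact-open topologies and with the notion of $\pi$-uniformity — all of which are immediate once stated. Thus the proof is essentially one sentence: apply Theorem~\ref{not_pi_uniform_LOTS} with the trivial order.
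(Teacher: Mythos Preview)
Your proposal is correct and matches the paper's own approach exactly: the paper also derives Theorem~\ref{t:usualcase} directly from Theorem~\ref{not_pi_uniform_LOTS} via Remark~\ref{r:part}, using the trivial order on $X$ so that $\H_+(X,=)=\H(X)$. Your additional remarks verifying that $\H_+(X,=)=\H(X)$ and that the notion of $\pi$-uniformity is independent of the order are accurate and make explicit what the paper leaves implicit.
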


By Lemma \ref{l:closed} $\H_+(X)$ is a closed subgroup of $\H(X)$ for every partially ordered compact space $X$.
So, if $P$ is a closed subgroup of $\H_+(X)$ it is also a closed subgroup of $\H(X).$ Therefore, by Theorem \ref{t:usualcase} the compact-open topology $\tau_{co}$ is minimal
within the class of $\pi$-uniform topologies on $P.$ It follows that Theorem \ref{not_pi_uniform_LOTS} can be derived back from Theorem \ref{t:usualcase}.

Let us extend Theorem \ref{not_pi_uniform_LOTS} in some algebraic setting.
Let $\omega\co K\times K\to K$ be a binary operation on a compact space $K$.
Denote by $\Aut(K)$ the group of all topological automorphisms of the  structure $(K,\omega)$. If $K$ is a compact ordered space, then we denote by $\Aut_+(K)$ the group of
all order preserving
automorphisms of $(K,\omega,\leq)$. Note that $\Aut_+(K)=\Aut(K) \cap \H_+(K)$.
Since  $\Aut_+(K)$ is a closed subgroup of $\H_+(K)$ by Theorem \ref{not_pi_uniform_LOTS} we obtain:

\begin{corol}\label{AutIsMinimal}
    If $K$ is a compact ordered space with a binary operation $\omega$ and $P$ is a closed subgroup of $\Aut_+(K),$ then   the compact-open topology is minimal within the class of $\pi$-uniform topologies   on $P$.
\end{corol}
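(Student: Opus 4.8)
The plan is to derive Corollary \ref{AutIsMinimal} as an immediate consequence of Theorem \ref{not_pi_uniform_LOTS}, by checking that the group $\Aut_+(K)$ fits into the hypotheses of that theorem. Concretely, the compact ordered space $(K,\tau_{\leq})$ plays the role of the compact partially ordered space $X$ in Theorem \ref{not_pi_uniform_LOTS}, and we need only verify that $\Aut_+(K)$ is a closed subgroup of $\H_+(K)$; then any closed subgroup $P$ of $\Aut_+(K)$ is automatically a closed subgroup of $\H_+(K)$, and Theorem \ref{not_pi_uniform_LOTS} applies verbatim to $P$.

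The first step is to recall that, by definition, $\Aut_+(K) = \Aut(K) \cap \H_+(K)$, where $\Aut(K)$ denotes the group of topological automorphisms of the binary structure $(K,\omega)$ and $\H_+(K)$ the order-preserving homeomorphisms. Since $\H_+(K)$ is itself a closed subgroup of $\H(K)$ by Lemma \ref{l:closed}, it suffices to show that $\Aut(K)$ is closed in $\H(K)$ with respect to $\tau_{co}$; the intersection of two closed subgroups is then a closed subgroup. To see that $\Aut(K)$ is closed, I would argue pointwise: a homeomorphism $f \in \H(K)$ lies in $\Aut(K)$ precisely when $f(\omega(a,b)) = \omega(f(a),f(b))$ for all $a,b \in K$. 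Because $K$ is compact, $\tau_{co}$ coincides with the topology of uniform convergence (as already used in Step 5 of the proof of Theorem \ref{not_pi_uniform_LOTS}), and in particular evaluation maps are continuous; the condition $f(\omega(a,b)) = \omega(f(a),f(b))$ for a fixed pair $(a,b)$ is a closed condition on $f$ since $\omega$ and $f \mapsto f(x)$ are continuous and $K$ is Hausdorff, so $\Aut(K)$ is an intersection of closed sets, hence closed.

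Having established that $P$ is a closed subgroup of $\H_+(K)$, I would then simply invoke Theorem \ref{not_pi_uniform_LOTS} with $X = K$ equipped with its order $\leq$ and the natural action of $P$ on $K$: the compact-open topology $\tau_{co}$ on $P$ is minimal within the class of $\pi$-uniform topologies on $P$. This is exactly the assertion of the corollary.

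The only mild subtlety — and hence the ``main obstacle,'' though it is quite minor — is making sure that no continuity or compatibility hypothesis on $\omega$ is silently needed. We are given no assumption that $\omega$ is continuous, but this is harmless: $\Aut(K)$ is defined as a subset of $\H(K)$, and the condition defining it, $f \circ \omega = \omega \circ (f \times f)$ evaluated at each pair $(a,b)$, is a condition comparing two \emph{points} of the Hausdorff space $K$, namely $f(\omega(a,b))$ and $\omega(f(a),f(b))$; closedness in $f$ only requires that $f \mapsto f(\omega(a,b))$ and $f \mapsto \omega(f(a),f(b))$ be continuous in the topology of uniform convergence, and the latter follows because $f \mapsto (f(a),f(b))$ is continuous and we may then pass through $\omega$ (if $\omega$ is not continuous one instead observes directly that $\{f : f(\omega(a,b)) = \omega(f(a),f(b))\}$ is the preimage of the diagonal under a continuous map into $K\times K$ precisely when the relevant coordinate maps are continuous, which they are). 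Either way the set $\Aut(K)$ is closed, so the reduction to Theorem \ref{not_pi_uniform_LOTS} goes through without further hypotheses.
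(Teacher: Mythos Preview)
Your approach is exactly the paper's: observe that $\Aut_+(K)=\Aut(K)\cap\H_+(K)$ is closed in $\H_+(K)$, so any closed $P\leq\Aut_+(K)$ is closed in $\H_+(K)$, and then invoke Theorem~\ref{not_pi_uniform_LOTS}. The paper states the closedness of $\Aut_+(K)$ without argument; your added justification via evaluation maps is correct provided $\omega$ is continuous.

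One genuine wrinkle: your parenthetical attempt to cover the case of a \emph{discontinuous} $\omega$ does not work. You claim that $\{f: f(\omega(a,b))=\omega(f(a),f(b))\}$ is the preimage of the diagonal under a continuous map ``precisely when the relevant coordinate maps are continuous, which they are'' --- but the second coordinate $f\mapsto\omega(f(a),f(b))$ is continuous only if $\omega$ is, so this is circular. Indeed, without continuity of $\omega$ there is no reason for $\Aut(K)$ to be $\tau_{co}$-closed. The paper does not address this point either, and in every application (Corollary~\ref{c:m}, Theorem~\ref{t:Aut}) the operation is the continuous group multiplication, so the issue is moot in practice; but your ``either way'' should be dropped and continuity of $\omega$ tacitly assumed, as the paper does.
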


By Remark \ref{r:part}, every topological group can be viewed as an ordered topological space equipped with a group operation. Therefore, by Corollary \ref{AutIsMinimal} we get the following:

\begin{corol} \label{c:m}
    If $K$ is a compact topological group and $P$ is a closed subgroup of $\Aut(K),$  then the compact-open topology is minimal within the class of $\pi$-uniform topologies on $P$.
\end{corol}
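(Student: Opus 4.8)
The plan is to obtain Corollary \ref{c:m} as a direct specialization of Corollary \ref{AutIsMinimal}, viewing the compact topological group $K$ as a compact ordered space carrying a binary operation. First I would take $\omega \co K \times K \to K$ to be the group multiplication of $K$, and endow $K$ with the trivial order, i.e. equality. By Remark \ref{r:part}, the graph of this order is the diagonal, which is closed in $K \times K$ precisely because $K$ is Hausdorff; hence $(K,\omega,=)$ is a compact ordered space equipped with a binary operation in exactly the sense required by Corollary \ref{AutIsMinimal}.

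Next I would identify the relevant automorphism groups. Since for the trivial order every bijection is order-preserving, we have $\H_+(K) = \H(K)$, and therefore
$$\Aut_+(K) = \Aut(K) \cap \H_+(K) = \Aut(K),$$
where $\Aut(K)$ denotes the group of topological automorphisms of the structure $(K,\omega)$, which for a topological group is nothing but its usual automorphism group. In particular, a closed subgroup $P$ of $\Aut(K)$ is the same as a closed subgroup of $\Aut_+(K)$ for this choice of order. Applying Corollary \ref{AutIsMinimal} to $K$ (with the operation $\omega$ and the trivial order) and to the closed subgroup $P \seq \Aut_+(K)$ then yields that the compact-open topology on $P$ is minimal within the class of $\pi$-uniform topologies, which is the assertion.

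I do not expect a genuine obstacle here: once Corollary \ref{AutIsMinimal}, and behind it Theorem \ref{not_pi_uniform_LOTS}, is in hand, the whole content is the observation that passing through the ordered-space framework with the trivial order costs nothing. The only points meriting a line of verification are that the topological-group axioms are subsumed under "a binary operation on a compact space together with a closed order" and that the continuous automorphisms of $(K,\omega)$ coincide with the topological group automorphisms of $K$; both are immediate. Consequently the proof is essentially a single invocation of Corollary \ref{AutIsMinimal}, exactly as the sentence preceding the statement already indicates.
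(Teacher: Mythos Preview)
Your proposal is correct and follows essentially the same approach as the paper: the paper's entire argument is the sentence ``By Remark \ref{r:part}, every topological group can be viewed as an ordered topological space equipped with a group operation. Therefore, by Corollary \ref{AutIsMinimal} we get the following,'' and you have simply unpacked this in detail (trivial order, $\H_+(K)=\H(K)$, hence $\Aut_+(K)=\Aut(K)$, then invoke Corollary \ref{AutIsMinimal}).
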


\section{Minimality of $G \leftthreetimes P$ where  $P$ is closed}
The main goal of this section is to prove that for every compact topological group $G$, the natural semidirect product $G \leftthreetimes P$ is a minimal topological group
for every closed subgroup $P \leq  \Aut(G)$ (Theorem \ref{t:Aut}).

We need the following technical result which is inspired by \cite[Prop. 2.6]{Meg95} (see also \cite[Theorem 4.13]{MegDik}).

\begin{thm} \label{contati}
    Let $(M,\gamma)$ be a topological group, $X$ and $G$ are subgroups of $M$ such that $M$ is \emph{algebraically} a semidirect product $M=X \leftthreetimes_{\alpha} G$.
    Assume that the topological subgroup $(X,\gamma|_X)$ of $(M,\gamma)$ is compact.
    Then the action $$\alpha\co(G, \gamma/X)\times (X,\gamma|_X) \to (X, \gamma|_X)$$ is continuous at $(e_G,e_X)$, where $\gamma/X$ is the coset topology on $G$ induced by $\gamma$.
\end{thm}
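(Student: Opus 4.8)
The plan is to translate everything into neighborhoods inside $(M,\gamma)$. Since $M=X\leftthreetimes_{\alpha}G$ algebraically, $X$ is normal in $M$ and the action $\alpha$ is conjugation: $\alpha(g)(x)=gxg^{-1}\in X$ for $g\in G$, $x\in X$. A basic neighborhood of $e_X$ in $(X,\gamma|_X)$ is $V_0\cap X$ with $V_0\in N_{e_M}(\gamma)$; and since the quotient map $q\co(M,\gamma)\to M/X$ is open and the composite $G\hookrightarrow M\twoheadrightarrow M/X$ is a bijection identifying $G$ with $M/X$, a basic neighborhood of $e_G$ in $(G,\gamma/X)$ is $G\cap VX$ with $V\in N_{e_M}(\gamma)$. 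So it suffices to prove: for every $V_0\in N_{e_M}(\gamma)$ there exist $V\in N_{e_M}(\gamma)$ and a neighborhood $W'$ of $e_X$ in $X$ such that $gxg^{-1}\in V_0$ whenever $g\in G\cap VX$ and $x\in W'$.

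Next I fix a symmetric $V_1\in N_{e_M}(\gamma)$ with $V_1^3\seq V_0$ (possible by continuity of multiplication and inversion in $M$) and take $V:=V_1$. If $g\in G\cap V_1X$, write $g=vt$ with $v\in V_1$, $t\in X$; then $gxg^{-1}=v(txt^{-1})v^{-1}$, and since $v,v^{-1}\in V_1$ this lies in $V_1^3\seq V_0$ as soon as $txt^{-1}\in V_1$. Hence the whole statement reduces to the following: there is a neighborhood $W'$ of $e_X$ in $X$ such that $tW't^{-1}\seq V_1$ \emph{for every} $t\in X$ simultaneously. This uniformity over all $t\in X$ is the heart of the matter, and it is precisely where compactness of $(X,\gamma|_X)$ enters — indeed it is the only place it is used. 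The conjugation map $c\co X\times X\to X$, $c(t,x)=txt^{-1}$, is continuous (a restriction of conjugation in $M$), and $c(t,e_X)=e_X\in V_1\cap X$ for every $t$; continuity gives neighborhoods $O_t\ni t$ and $W_t\ni e_X$ in $X$ with $c(O_t\times W_t)\seq V_1\cap X$, compactness of $X$ extracts a finite subcover $O_{t_1},\dots,O_{t_k}$, and $W':=\bigcap_{i=1}^k W_{t_i}$ does the job: any $t\in X$ lies in some $O_{t_i}$, so $tW't^{-1}\seq c(O_{t_i}\times W_{t_i})\seq V_1$.

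Putting the pieces together, for $g\in G\cap V_1X$ and $x\in W'$ we obtain $gxg^{-1}=v(txt^{-1})v^{-1}\in V_1^3\seq V_0$, and since $gxg^{-1}\in X$ (normality of $X$) this gives $gxg^{-1}\in V_0\cap X$, which is exactly the continuity of $\alpha$ at $(e_G,e_X)$. I expect the only genuine obstacle to be the compactness step — in effect, the fact that compact groups have small invariant neighborhoods; the reduction to it and the final assembly are routine manipulations with neighborhoods, and the one point to handle with care is checking that $G\cap VX$ really does describe the basic $(\gamma/X)$-neighborhoods of $e_G$ under the identification $G\cong M/X$.
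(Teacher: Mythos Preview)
Your proof is correct and follows essentially the same route as the paper: both arguments identify $\alpha$ with conjugation, describe $\gamma/X$-neighborhoods via the projection $M\to M/X$, and combine continuity of conjugation in $(M,\gamma)$ with the SIN property of the compact group $X$. The only cosmetic difference is the order of the two key steps---the paper first invokes compactness to get $O_1$ with $x^{-1}O_1x\subseteq O$ for all $x\in X$ and then uses continuity of conjugation at $(e_M,e_X)$, whereas you first shrink to a symmetric $V_1$ with $V_1^3\subseteq V_0$ and then use compactness to find $W'$ with $tW't^{-1}\subseteq V_1$ for all $t$; the content is the same.
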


\begin{proof}
    Let $pr\co M \to G=M/X$, $(x,g) \mapsto g$, denote the canonical projection. Algebraically $M/X=\{X\times \{g\}\}_{g\in G}$, which allows us to identify $G$ with $M/X$, and thus the topological group $(G,\gamma/X)$ is well defined.

    To show that $\alpha$ is continuous at $(e_G,e_X)$ let
    $O \in \gamma|_X$ 	be a neighborhood of $e_X$. We will find neighborhoods $P$ of $e_G$
    in $(G, \gamma/X)$ and $U$ of $e_X$ in $(X, \gamma|_X)$ such that $\alpha(P \times U) \seq O$.

    Since $X$ is a compact group, there exists a neighborhood $O_1$ of $e_X$ such that for all $x\in X$ we have $x^{-1}O_1x\seq O$.
    The restriction $M\times X \to X$, $(a,x) \mapsto axa^{-1}$ of the conjugation $M \times M \to M$ in the topological group $(M,\gamma)$ is (well-defined, because $X$ is a normal subgroup of $M$) continuous at $(e_M,e_X)$. Therefore, for $O_1$ there exist a neighborhood $U$ of $e_X$ in $(X, \gamma|_X)$) and a neighborhood $V$ of $e_M$ in $(M,\gamma)$ such that $vUv^{-1} \seq O_1$ for all $v\in V$.

    Consider the canonical projection $pr\co M \to G=M/X$. Then $P:=pr(V) \in \gamma/X$ is a neighborhood of $e_G$ in $(G, \gamma/X)$.
    We claim that $P$ and $U$ satisfy the needed conditions above. That is, we want to show that $\alpha (g,z):=g(z) \in O$ for all $(g,z) \in pr(V)\times U$. Indeed, if $g \in pr(V)$ there exists $x \in X$ such that $(x,g) \in V$, and recall that $z \in U$ is in fact $(z,e_G)$. We know that $vzv^{-1} \in O_1$. Therefore, $$vzv^{-1}=(x,g)(z,e_G)(x,g)^{-1}= (x g(z),g)(g^{-1}(x^{-1}),g^{-1})=$$
    $$=(x g(z) g(g^{-1}(x^{-1})),e_G)=
    (x g(z)x^{-1},e_G)=x g(z)x^{-1} \in O_1.$$ Thus $\alpha (g,z)\in x^{-1}O_1x \seq O$, which completes the proof.
\end{proof}

\begin{thm} \label{t:Aut}
    If $G$ is a compact topological group, then $G \leftthreetimes P$ is minimal for every closed subgroup $P$ of $\Aut(G)$.
\end{thm}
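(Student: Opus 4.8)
The plan is to apply Merson's Lemma (Lemma \ref{merson}) to the normal subgroup $G$ of $M:=G\leftthreetimes P$. Write $\gamma$ for the given product topology of $M$ and let $\gamma_1\seq\gamma$ be an arbitrary coarser Hausdorff group topology on $M$; it suffices to show that $\gamma_1|_G=\gamma|_G$ and $\gamma_1/G=\gamma/G$, for then Merson's Lemma gives $\gamma_1=\gamma$ and hence $M$ is minimal.

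The equality on $G$ is immediate: $(G,\gamma|_G)$ is compact while $(G,\gamma_1|_G)$ is Hausdorff, so the identity map $(G,\gamma|_G)\to(G,\gamma_1|_G)$ is a continuous bijection from a compact space onto a Hausdorff space, hence a homeomorphism, and $\gamma_1|_G=\gamma|_G$. In particular $G$ is $\gamma_1$-compact, hence $\gamma_1$-closed in $M$, so the quotient $M/G\cong P$ carries a \emph{Hausdorff} group topology $\gamma_1/G$. Since the underlying space of $(M,\gamma)$ is $G\times P$ with $G$ compact, the quotient topology $\gamma/G$ is precisely the compact-open topology $\copt$ of $P$, and $\gamma_1\seq\gamma$ forces $\gamma_1/G\seq\copt$. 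Thus everything reduces to proving $\gamma_1/G=\copt$.

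Here Corollary \ref{c:m} enters: because $P$ is a \emph{closed} subgroup of $\Aut(G)$ and $G$ is compact, $\copt$ is minimal within the class of $\pi$-uniform topologies on $P$. So it is enough to verify that $\gamma_1/G$ is a $\pi$-uniform topology on $P$, that is (Definition \ref{d:unif}), that the natural evaluation action $\alpha\co(P,\gamma_1/G)\times G\to G$ is $\pi$-uniform with respect to the unique uniformity of the compact group $G$. For this I apply Theorem \ref{contati} with the symbols $X$ and $G$ of its statement instantiated, respectively, as our compact group $G$ and our $P$: algebraically $(M,\gamma_1)$ is the semidirect product $G\leftthreetimes_\alpha P$ and $(G,\gamma_1|_G)$ is compact, so $\alpha\co(P,\gamma_1/G)\times(G,\gamma_1|_G)\to(G,\gamma_1|_G)$ is continuous at $(e_P,e_G)$. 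Since $\alpha$ acts by continuous automorphisms of the compact group $G$, and on a compact group the uniformities $\mathcal U_l,\mathcal U_r,\mathcal U_{l\vee r}$ all coincide with the unique compatible uniformity, Lemma \ref{prtq}.2 promotes continuity at the identity to $\pi$-uniformity of $\alpha$. Hence $\gamma_1/G$ is $\pi$-uniform, Corollary \ref{c:m} yields $\gamma_1/G=\copt=\gamma/G$, and Merson's Lemma finishes the proof.

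The crux of the argument --- and the only place where the hypothesis that $P$ is closed in $\Aut(G)$ is used --- is the invocation of Corollary \ref{c:m}, which is Theorem \ref{not_pi_uniform_LOTS} applied to $G$ regarded (via Remark \ref{r:part}) as a compact ordered space equipped with the trivial order and its own group operation; for non-abelian $G$ this step genuinely breaks down without closedness, as Example \ref{nil} will show. The remaining ingredients --- the compact-to-Hausdorff homeomorphism on $G$, the identification of $\gamma/G$ with $\copt$, and the passage from being continuous at the identity to being $\pi$-uniform via Theorem \ref{contati} and Lemma \ref{prtq}.2 --- are routine.
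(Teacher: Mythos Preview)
Your proof is correct and follows essentially the same route as the paper's own proof: both reduce to Merson's Lemma applied to the normal subgroup $G$, use compactness of $G$ to get $\gamma_1|_G=\gamma|_G$, invoke Theorem \ref{contati} together with Lemma \ref{prtq}.2 to see that $\gamma_1/G$ is a $\pi$-uniform topology on $P$, and then apply Corollary \ref{c:m} to conclude $\gamma_1/G=\copt=\gamma/G$. Your write-up is slightly more explicit (e.g., the compact-to-Hausdorff argument on $G$, the remark that the three uniformities on a compact group coincide, and the observation about where closedness of $P$ is actually used), but there is no substantive difference in approach.
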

\begin{proof}
    Let $\tau$ be the given topology on $G$, and $\copt$ the compact-open topology on $P \subseteq \Aut(G)$. Denote by $\gamma$ the product topology on $G \leftthreetimes P$, and by $e=id_G \in P$ the identity automorphism. Assume that $\gamma_1 \seq \gamma$ is a coarser Hausdorff group topology on $G \leftthreetimes P$.	
    Since $G$ is compact we have $\gamma_1|_G=\gamma|_G=\tau$.

    The action $$\alpha\co (P, \gamma_1/G)\times (G,\gamma_1|_G) \to (G,\gamma_1|_G)$$ is continuous at the identity $(id_G,e_G)$ by  Theorem \ref{contati}.
    Furthermore, $\gamma_1/G$ is a Hausdorff topology on $\Aut(G)$  since $G$ is a compact (hence, closed) subgroup of the Hausdorff group $(G \leftthreetimes P, \gamma_1)$. Therefore $\gamma_1/G$ is an $\alpha$-uniform topology on $P$ (Lemma \ref{prtq}.2
    and Definition \ref{d:unif}).

    Since $\gamma_1/G \seq \gamma/G = \copt$ and $\copt$ is minimal within the class of $\alpha$-uniform topologies on $P$
    (Corollary \ref{c:m}),
    we have  $\gamma_1/G = \gamma/G $. Finally, using Merson's Lemma \ref{merson}, we deduce that $\gamma_1 =\gamma$ and that concludes the proof.

\end{proof}

\section{Minimality of $G \leftthreetimes P$}

	In this section we provide an equivalent condition 
	 (see Theorem \ref{thm:sol}) for the non-minimality of $G \leftthreetimes P,$  where $G$ is a compact group and $P\leq \Aut(G)$. 
	As a corollary we obtain that if the compact group $G$ is  \textit{abelian}, then Theorem \ref{t:Aut} holds for all (thus not necessarily closed) subgroups $P$ of $\Aut(G)$. 
	Theorem \ref{thm:sol} also allows us to construct relevant counterexamples (Example \ref{nil} and Theorem \ref{thm:bool}).  
	
	We use 
	the well known Minimality Criterion which, for compact groups, can be traced back to 
	Stephenson \cite{Steph}  and Prodanov \cite{P1}. 
	 Note that Banaschewski \cite{Ban} generalized this criterion  by proving it for minimal topological algebras.
		
		First recall that
		a subgroup $H\leq G$ of a topological group $G$ is said to be {\it essential} in $G$
		if $H\cap N$ is nontrivial for every nontrivial  closed normal subgroup $N$ of $G.$
		
		The following theorem can be found, for example, in \cite[Theorem 2.5.1]{DPS}.
		\begin{thm}
			[Minimality Criterion]Let $G$ be a topological group and $H$ its dense subgroup. $H$ is minimal if and only if $G$ is minimal and $H$ is essential in $G.$
		\end{thm}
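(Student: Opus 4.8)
\emph{Proof proposal.} This is the classical Minimality Criterion (for compact groups going back to Stephenson and Prodanov); the plan is to prove the two implications separately, the substantive one being that a dense essential subgroup of a minimal group is minimal. \emph{Assume first that $(G,\tau)$ is minimal and that $H$ is essential; I will show $H$ is minimal.} Let $\sigma\subseteq\tau|_H$ be a Hausdorff group topology on $H$; the goal is $\sigma=\tau|_H$. Let $\iota\colon(H,\sigma)\to R$ be the Raikov (two-sided) completion, so $\iota$ is a topological embedding onto a dense subgroup and $R$ is a complete Hausdorff group. The identity map $(H,\tau|_H)\to(H,\sigma)$ is a continuous homomorphism, hence uniformly continuous, so $j:=\iota\circ\mathrm{id}\colon(H,\tau|_H)\to R$ is uniformly continuous. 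Since $H$ is $\tau$-dense in $G$, the completion of $(H,\tau|_H)$ may be identified with $\widehat{(G,\tau)}$; hence $j$ extends to a continuous homomorphism $\widehat{(G,\tau)}\to R$ whose restriction $f\colon(G,\tau)\to R$ is a continuous homomorphism with $f|_H=j$.

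Now equip $G$ with the initial group topology $\sigma^*$ induced by $f$, i.e.\ $\sigma^*=\{f^{-1}(O):O\text{ open in }R\}$. It is a group topology (preimage of one under a homomorphism), it is coarser than $\tau$ because $f$ is $\tau$-continuous, and $\sigma^*|_H=\sigma$ because $f|_H=\iota\circ\mathrm{id}$ and $\iota$ is an embedding. The subgroup $N:=\ker f$ is $\tau$-closed and normal in $G$, and $N\cap H=\ker(f|_H)=\ker\iota=\{e\}$ since $\iota$ is injective. As $H$ is essential in $(G,\tau)$, this forces $N=\{e\}$, so $f$ is injective and $\sigma^*$ is Hausdorff. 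But $\sigma^*\subseteq\tau$ and $(G,\tau)$ is minimal, whence $\sigma^*=\tau$; restricting to $H$ gives $\sigma=\sigma^*|_H=\tau|_H$, as required.

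\emph{Conversely, assume $H$ is minimal.} For minimality of $G$, let $\tau'\subseteq\tau$ be a coarser Hausdorff group topology; then $\tau'|_H\subseteq\tau|_H$ is a coarser Hausdorff group topology on $H$, so $\tau'|_H=\tau|_H$ by minimality of $H$. Since $H$ is $\tau$-dense (hence $\tau'$-dense) in $G$, a routine closure argument promotes this to $\tau'=\tau$: given a $\tau$-neighbourhood $U$ of $e$, choose a $\tau$-neighbourhood $V$ of $e$ with $\overline{V}^{\,\tau}\subseteq U$ and a $\tau'$-open $W\ni e$ with $W\cap H\subseteq V\cap H$; then $\tau$-density of $H$ yields $W\subseteq\overline{W\cap H}^{\,\tau}\subseteq\overline{V}^{\,\tau}\subseteq U$, so $U$ is a $\tau'$-neighbourhood of $e$. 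For essentiality, suppose a nontrivial $\tau$-closed normal subgroup $N$ of $G$ satisfies $N\cap H=\{e\}$, and let $q\colon(G,\tau)\to G/N$ be the (Hausdorff) quotient map. Then $q|_H\colon H\to G/N$ is an injective continuous homomorphism, hence a topological embedding by minimality of $H$. Pick $x\in N\setminus\{e\}$ and a net $(h_i)$ in $H$ with $h_i\to x$ in $\tau$; then $q(h_i)\to q(x)=e$ in $q(H)$, so $h_i\to e$ in $(H,\tau|_H)$ and thus in $(G,\tau)$. Since $(G,\tau)$ is Hausdorff and also $h_i\to x$, we get $x=e$, a contradiction. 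Therefore $N\cap H\neq\{e\}$, i.e.\ $H$ is essential.

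\emph{Expected main obstacle.} The delicate point is the first implication: one cannot in general extend the topology $\sigma$ itself to a group topology on $G$, so instead one pushes $(H,\sigma)$ into its Raikov completion $R$ and uses the identification of the completions of $(H,\tau|_H)$ and $(G,\tau)$ (valid because $H$ is $\tau$-dense) to manufacture the continuous homomorphism $f\colon(G,\tau)\to R$. Once $f$ is available, essentiality kills its kernel and minimality of $G$ finishes the argument; everything else reduces to standard density/closure manipulations in topological groups together with the embedding reformulation of minimality recalled in the introduction.
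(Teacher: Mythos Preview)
The paper does not prove this statement at all: it is quoted from the literature (``The following theorem can be found, for example, in \cite[Theorem 2.5.1]{DPS}'') and used as a black box in Section~5. So there is no ``paper's own proof'' to compare with, and your proposal supplies an argument where the paper supplies none.

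As for correctness, your argument is the standard one and is sound. In the forward direction the key move---embedding $(H,\sigma)$ into its Raikov completion $R$, extending the uniformly continuous homomorphism $(H,\tau|_H)\to R$ over the common completion $\widehat{(G,\tau)}=\widehat{(H,\tau|_H)}$, and then using essentiality to kill $\ker f$---is exactly right, and you correctly flag that one cannot simply ``extend $\sigma$'' to $G$ directly. In the converse, the density/closure promotion of $\tau'|_H=\tau|_H$ to $\tau'=\tau$ is fine (your $W$ is $\tau'$-open hence $\tau$-open, so $H$ is $\tau$-dense in $W$), and the essentiality argument via the quotient $q\colon G\to G/N$ together with the embedding reformulation of minimality is clean. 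One cosmetic remark: in the forward direction you might note explicitly that the extension $\widehat{(G,\tau)}\to R$ is a \emph{homomorphism} (not just a uniformly continuous map), which follows by density of $H$ and Hausdorffness of $R$; you use this when asserting $\sigma^*$ is a group topology and $N=\ker f$ is a normal subgroup.
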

	\begin{thm}\label{thm:sol} Let $G$ be a compact group and $P\leq \Aut(G).$   
		Then	the following two conditions are equivalent:  
			\begin{enumerate}
				\item $G\leftthreetimes P$ is not minimal.
				\item There exists a closed nontrivial subgroup $H$ of $G$ satisfying the following conditions: \begin{enumerate}[(a)] 
					\item $H\cap Z(G)=\{e_G\}.$
					\item $H$ is $P$-invariant (that is, $f(H)\subseteq H$ for every $f\in P$).
					\item  
					$\Gamma(H) \subseteq \overline{P}$
					and $\Gamma(H)\cap P=\{e_P\},$ where $\Gamma \co G\to \Inn(G)$ is the 
					natural homomorphism defined by $\Gamma(g)=\gamma_g.$
				\end{enumerate} \end{enumerate}\end{thm}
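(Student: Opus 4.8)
The plan is to realize $G\leftthreetimes P$ as a dense subgroup of $M:=G\leftthreetimes\overline{P}$, where $\overline{P}$ denotes the closure of $P$ in $\Aut(G)$, and to convert the failure of minimality into the existence of a suitable closed normal subgroup of $M$. Since $\overline{P}$ is a closed subgroup of $\Aut(G)$, Theorem \ref{t:Aut} gives that $M$ is minimal, and $G\leftthreetimes P$ is dense in $M$ because $P$ is dense in $\overline{P}$. By the Minimality Criterion, $G\leftthreetimes P$ is minimal if and only if it is essential in $M$. Thus condition $(1)$ is equivalent to the existence of a nontrivial closed normal subgroup $N$ of $M$ with $N\cap(G\leftthreetimes P)=\{e_M\}$, and the task becomes to show that such $N$ are precisely the ``twisted graphs'' of the subgroups $H$ appearing in $(2)$.

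Here is the structural heart of the argument. Suppose $N$ is as above. Since $G\leq G\leftthreetimes P$ we also have $N\cap G=\{e_M\}$. For $(g_0,f_0)\in N$ and arbitrary $g\in G$, the element $(g,e_P)(g_0,f_0)(g,e_P)^{-1}(g_0,f_0)^{-1}$ lies in $N$ by normality and has trivial $\overline{P}$-coordinate, hence lies in $N\cap G=\{e_M\}$; carrying out the computation in the semidirect product yields $g\,g_0\,f_0(g^{-1})\,g_0^{-1}=e_G$ for all $g\in G$, i.e.\ $f_0=\gamma_{g_0}^{-1}=\Gamma(g_0)^{-1}$. Consequently $N=\{(h,\Gamma(h)^{-1}):h\in H\}$ for $H:=\{h\in G:(h,\Gamma(h)^{-1})\in N\}$, which --- as $h\mapsto(h,\Gamma(h)^{-1})$ is an anti-isomorphism of $G$ onto a compact subgroup of $M$ --- is a nontrivial closed subgroup of $G$; and $N\subseteq M$ forces $\Gamma(H)\subseteq\overline{P}$, the first half of (c). Conversely, for any closed $H\leq G$ with $\Gamma(H)\subseteq\overline{P}$, the displayed twisted graph is a closed subgroup of $M$ (the map $h\mapsto(h,\Gamma(h)^{-1})$ is an anti-homomorphism into $M$ --- one checks $(h_1,\Gamma(h_1)^{-1})(h_2,\Gamma(h_2)^{-1})=(h_2h_1,\Gamma(h_2h_1)^{-1})$ --- and it is continuous since $\Gamma$ is continuous, $G$ being compact).

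It then remains to read off the other two conditions. In $M$, conjugation by $(g,e_P)$ fixes every element of a twisted graph, while conjugation by $(e_G,f)$ with $f\in\overline{P}$ sends $(h,\Gamma(h)^{-1})$ to $(f(h),\Gamma(f(h))^{-1})$; as elements of these two kinds generate $M$, the twisted graph of $H$ is normal in $M$ if and only if $f(H)=H$ for every $f\in\overline{P}$. Because $H$ is closed and the evaluation map $\Aut(G)\times G\to G$ is continuous, the set $\{f\in\Aut(G):f(H)\subseteq H\}$ is closed, so this is equivalent to $P$-invariance of $H$ --- condition (b). Finally, $(h,\Gamma(h)^{-1})\in G\leftthreetimes P$ exactly when $\Gamma(h)\in P$, so $N\cap(G\leftthreetimes P)=\{e_M\}$ is equivalent to $\{h\in H:\Gamma(h)\in P\}=\{e_G\}$; using $\ker\Gamma=Z(G)$, a short check shows that this in turn is equivalent to (a) together with the remaining part of (c), namely $\Gamma(H)\cap P=\{e_P\}$. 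Assembling these equivalences proves $(1)\Leftrightarrow(2)$.

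I expect the main obstacle to be the structural description in the middle paragraph: the commutator computation forcing every element of $N$ into the form $(h,\Gamma(h)^{-1})$, and the verification that such twisted graphs are exactly the closed normal subgroups of $M$ meeting $G$ trivially. Once that is secured, the identification of conditions (a), (b) and (c) with normality and with the triviality of $N\cap(G\leftthreetimes P)$ is a matter of routine computation in the semidirect product together with elementary consequences of the compactness of $G$ (continuity of $\Gamma$, and the passage between $P$-invariance and $\overline{P}$-invariance of the closed subgroup $H$).
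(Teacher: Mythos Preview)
Your proposal is correct and follows essentially the same approach as the paper: both reduce to the Minimality Criterion applied to the dense inclusion $G\leftthreetimes P\hookrightarrow G\leftthreetimes\overline{P}$ (using Theorem~\ref{t:Aut}), and both identify the relevant closed normal subgroups $N$ via the same commutator computation forcing $N=\{(h,\gamma_h^{-1}):h\in H\}$. Your presentation is slightly more structured---packaging the correspondence $H\leftrightarrow N$ as a bijection and noting the anti-homomorphism property of $h\mapsto(h,\gamma_h^{-1})$ and the pointwise centrality of $G$ on the twisted graph---but the substance is the same as the paper's proof.
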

				\begin{proof}
					$(2)\Rightarrow (1)$: Let $H$ be a closed nontrivial subgroup  of $G$ satisfying  conditions $(a)-(c).$ By
					Theorem \ref{t:Aut}
					$G \leftthreetimes \overline{P}$ is minimal. Clearly,  $G \leftthreetimes P$ is a dense subgroup of $G \leftthreetimes \overline{P}.$ We are going to construct a nontrivial closed normal subgroup $N$ of $G \leftthreetimes \overline{P}$ such that 
					$N\cap (G \leftthreetimes P)$ is trivial. Using the Minimality Criterion this will imply that $G \leftthreetimes P$ is not minimal.
					Let $N:=\{(h,\gamma_h^{-1})| \ h\in H\}.$ Since $H$ is a 
					compact nontrivial subgroup of $G$ and $\Gamma(H)$ is a 
					compact 
					 subgroup of $\overline{P},$ we obtain that $N$ is a  nontrivial 
					 compact (hence closed) 
					 subgroup of $G \leftthreetimes \overline{P}.$  Being $P$-invariant and closed the subgroup $H$ is also $\overline{P}$-invariant. This implies that $N$ is normal in $G \leftthreetimes \overline{P}.$ Indeed, $(g,f)(h,\gamma_h^{-1})(g,f)^{-1}=(f(h),\gamma_{f(h)}^{-1}).$ Let $(h,\gamma_h^{-1})$ be a nontrivial element of $N.$ Then we necessarily have $h\neq e_G.$  By 
					 condition $(a)$ this implies that $\gamma_h^{-1}$ is a nontrivial element of $\Gamma(H).$ It follows from 
					 $(c)$ that $\gamma_h^{-1}\notin P.$ Therefore, $(h,\gamma_h^{-1})\notin  G \leftthreetimes P$ and we conclude that 
					 $N\cap (G \leftthreetimes P)$ is trivial as needed.\\
					$(1)\Rightarrow (2):$ Assume that $G\leftthreetimes P$ is not minimal. If follows from  
					Theorem \ref{t:Aut} and the Minimality Criterion that $G \leftthreetimes P$ is not essential in $G\leftthreetimes \overline{P}.$ So, there exists a  nontrivial closed normal subgroup $N$ of $G\leftthreetimes \overline{P}$ such that  $N\cap (G\leftthreetimes P)$ is trivial. We will prove $(2)$ by showing that $N=\{(h,\gamma_h^{-1})| \ h\in H\},$ where   $H$ is a nontrivial closed subgroup of $G$ satisfying conditions $(a)-(c).$ First, let us show that every element in $N$ has the form $(h,\gamma_h^{-1}),$ for some $h\in G.$  Indeed, otherwise, there exists $u_1=(h,f)\in N$ such that $f\neq \gamma_h^{-1}.$ Choose $u_2=(g,e_P),$ where $f(g)\neq \gamma_{h}^{-1}(g).$ Since $N$ is normal 
					in $G\leftthreetimes \overline{P}$
					 the commutator
					 $$[u_1,u_2]=(h\cdot f(g)\cdot h^{-1}\cdot g^{-1},e_P)$$ is an element of $N.$ The inequality $f(g)\neq \gamma_{h}^{-1}(g)$ 
					 means that 
					$ h\cdot f(g)\cdot h^{-1}\cdot g^{-1}\neq e_G.$ It follows that $[u_1,u_2]$ is a nontrivial element of $G \leftthreetimes P.$
					This contradicts the fact that $N\cap (G\leftthreetimes P)$ it trivial. Therefore, all elements of $N$ have the form $(h,\gamma_h^{-1}).$ 
					The group $N$ is compact being a closed subgroup of the compact group $G\leftthreetimes (\overline{P}\cap \Inn(G)).$  
	 By the continuity of the canonical projection $pr\co G \leftthreetimes \overline{P} \to G$, we conclude that the subgroup $H:=pr(N)$ is closed (being compact) in $G.$				
					 Moreover, 
					 the structure of $N$ implies that $H$ is also nontrivial. 
					 
					 To  prove $(a)$ assume 
					 that there exists a nontrivial element  $h\in H\cap Z(G).$ But then $(h,\gamma_h^{-1})$ is a nontrivial element of $N\cap (G\leftthreetimes P),$ and that is a contradiction. 
					 
					 Property $(b)$ follows from the normality of $N.$ Indeed, if $f\in P$ and $h\in H$ we have  $(e_G,f)(h,\gamma_h^{-1})(e_G,f)^{-1}=(f(h),\gamma_{f(h)}^{-1})\in N.$ Hence, $f(h)\in H.$
				
					Finally, we prove property $(c).$
					Let $\gamma_h\in \Gamma(H),$ where $h\in H=pr(N).$ 
					Then, $(h,\gamma_h^{-1})\in N\subseteq G\leftthreetimes \overline{P}.$ Therefore, $\gamma_h^{-1}\in \overline P$ and since $\overline P$ is a group we also have $\gamma_h\in \overline P$. This proves that $\Gamma(H)\subseteq \overline P.$ 
						 Let us show that $\Gamma(H)\cap P$ is trivial. Otherwise, there exists a nontrivial $h\in H$ such that $\gamma_h^{-1}\in \Gamma(H)\cap P.$ But then  $(e_G,e_P)\neq(h, \gamma_h^{-1})\in N\cap (G\leftthreetimes P),$ contradicting the triviality of $N\cap (G\leftthreetimes P).$ This completes the proof.
				\end{proof}
				\begin{corol}\label{cor:min}
					Let $G$ be a compact group and $P\leq \Aut(G).$ Then $G\leftthreetimes P$ is minimal in each of the following cases:
					\begin{enumerate}
						\item $G$ is abelian;
						\item $P$ is essential in $\overline{P}$ 
						(e.g., $P$ is closed);
						\item 
						$P\cap \Inn(G)$ is essential in $\overline{P}\cap \Inn(G).$
					\end{enumerate}
				\end{corol}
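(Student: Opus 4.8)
The plan is to obtain all three statements as contrapositives of the implication $(1)\Rightarrow(2)$ in Theorem \ref{thm:sol}. So I would assume that $G\leftthreetimes P$ is \emph{not} minimal and fix the closed nontrivial subgroup $H\leq G$ supplied by that theorem, satisfying conditions (a)--(c). The first, case-independent, step is to collect two properties of the image $\Gamma(H)\subseteq \Inn(G)$. Since $\ker\Gamma=Z(G)$, condition (a) says that $\Gamma|_H$ is injective, so $\Gamma(H)$ is nontrivial; and since $H$ is compact (being closed in the compact group $G$) and $\Gamma$ is continuous, $\Gamma(H)$ is compact, hence a \emph{closed} subgroup of $\Inn(G)$, and by (c) also of $\overline{P}$.

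The second preparatory step is to show that $\Gamma(H)$ is \emph{normal} in $\overline{P}$. For this I would first upgrade the $P$-invariance of $H$ in (b) to $\overline{P}$-invariance, exactly as in the proof of Theorem \ref{thm:sol}: for fixed $x\in H$ the map $f\mapsto f(x)$ is continuous on $\overline{P}$ (continuity of the evaluation $\overline{P}\times G\to G$), so $\{f\in\overline{P}: f(x)\in H\}$ is closed and contains $P$, hence equals $\overline{P}$. Then $f\gamma_h f^{-1}=\gamma_{f(h)}\in\Gamma(H)$ for every $f\in\overline{P}$ and $h\in H$, which is the asserted normality. I regard this as the only step requiring any care, since what is really needed is normality in $\overline{P}$ (and hence in $\overline{P}\cap\Inn(G)$), not merely in $P$.

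With these facts in place, each case closes in one line against condition (c), which says $\Gamma(H)\cap P=\{e_P\}$. In case (1), $G$ abelian forces $Z(G)=G$, so (a) reads $H=\{e_G\}$, contradicting the nontriviality of $H$. In case (2), $\Gamma(H)$ is a nontrivial closed normal subgroup of $\overline{P}$, so essentiality of $P$ in $\overline{P}$ makes $P\cap\Gamma(H)$ nontrivial --- a contradiction (and when $P$ is closed, $\overline{P}=P$ is trivially essential in itself). In case (3), $\Gamma(H)\subseteq\overline{P}\cap\Inn(G)$ is a nontrivial closed normal subgroup of $\overline{P}\cap\Inn(G)$, so essentiality of $P\cap\Inn(G)$ in $\overline{P}\cap\Inn(G)$ makes $(P\cap\Inn(G))\cap\Gamma(H)$ nontrivial, which is again impossible since this intersection is contained in $\Gamma(H)\cap P$. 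No genuine obstacle is expected; the argument is a direct unwinding of Theorem \ref{thm:sol}.
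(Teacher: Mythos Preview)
Your proposal is correct and follows essentially the same contrapositive approach as the paper, deriving a contradiction from conditions (a)--(c) of Theorem \ref{thm:sol} in each case. In fact you are slightly more careful than the paper's own proof: you explicitly verify that $\Gamma(H)$ is \emph{normal} in $\overline{P}$ (via the $\overline{P}$-invariance of $H$), which is needed for the essentiality hypothesis to apply in cases (2) and (3), whereas the paper's proof states only that $\Gamma(H)$ is a closed subgroup and leaves normality implicit.
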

				\begin{proof}
					In each case 
					at least one of the  conditions of Theorem \ref{thm:sol}.2 does not hold.\\ $(1)$: If $G$ is an abelian group then $G=Z(G).$ Thus for every nontrivial $H\leq G$ the group $H\cap Z(G)$ is nontrivial and so $(a)$ is impossible.\\
					$(2)$: Assume for a contradiction that $H$ is a nontrivial closed subgroup of $G$ satisfying conditions 
		$(a)-(c).$	It follows that $N=\Gamma(H)$ is a nontrivial closed subgroup of $\overline{P}$ which trivially intersects $P.$ This contradicts the assumption that $P$ is essential in $\overline{P}.$\\
					$(3):$  Assume for a contradiction that $H$ is a nontrivial closed subgroup of $G$ satisfying conditions 
			$(a)-(c). $
				 It follows that $N=\Gamma(H)$ is a nontrivial closed subgroup of $\overline{P}\cap \Inn(G)$ which trivially intersects $\overline{P}\cap \Inn(G).$ This contradicts the assumption that 
					$P\cap \Inn(G)$ is essential in $\overline{P}\cap \Inn(G).$
				\end{proof}
				\begin{thm}
					Let $G$ be a compact group with trivial center and $P\leq \Inn(G).$
					Then $G \leftthreetimes P$ is minimal if and only if $P$ is essential in $\overline{P}.$ 
				\end{thm}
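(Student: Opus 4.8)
The plan is to prove the two implications separately, using the characterization in Theorem \ref{thm:sol} together with Corollary \ref{cor:min}. The forward implication (``if'') is immediate: if $P$ is essential in $\overline{P}$, then $G \leftthreetimes P$ is minimal by Corollary \ref{cor:min}(2), an implication that needs neither the triviality of $Z(G)$ nor the assumption $P \leq \Inn(G)$. So the real work is in the reverse implication, which I would prove in contrapositive form: if $P$ is \emph{not} essential in $\overline{P}$, then $G \leftthreetimes P$ is not minimal.

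The first step is to record the structural consequences of the hypotheses. Since $Z(G) = \{e_G\}$, the natural homomorphism $\Gamma \co G \to \Inn(G)$, $\Gamma(g) = \gamma_g$, is injective; it is continuous and $G$ is compact while $\Inn(G)$ is Hausdorff, so $\Gamma$ is a topological group isomorphism of $G$ onto $\Inn(G)$. Moreover $\Inn(G)$ is compact, hence closed in $\Aut(G)$, so from $P \leq \Inn(G)$ I obtain $\overline{P} \subseteq \Inn(G)$. Consequently every closed subgroup $M$ of $\overline{P}$ is of the form $M = \Gamma(H)$ for the uniquely determined closed subgroup $H := \Gamma^{-1}(M) \leq G$.

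Now assume $P$ is not essential in $\overline{P}$. Then there is a nontrivial closed normal subgroup $M$ of $\overline{P}$ with $M \cap P = \{e_P\}$, and I set $H := \Gamma^{-1}(M)$, a nontrivial closed subgroup of $G$. It then remains to verify conditions $(a)$--$(c)$ of Theorem \ref{thm:sol}(2). Condition $(a)$, $H \cap Z(G) = \{e_G\}$, is automatic because $Z(G)$ is trivial; this is the crucial point where the trivial-center hypothesis enters and is exactly what makes the clean ``iff'' possible. For $(b)$ I would use the identity $\Gamma(f(g)) = f\, \Gamma(g)\, f^{-1}$, valid for all $f \in \Aut(G)$ and $g \in G$ (verified by evaluating both conjugations on an arbitrary $x \in G$): for $f \in P \subseteq \overline{P}$ this gives $\Gamma(f(H)) = f M f^{-1} = M = \Gamma(H)$, using the normality of $M$ in $\overline{P}$, and injectivity of $\Gamma$ then yields $f(H) = H$, so $H$ is $P$-invariant. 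Condition $(c)$ is read off directly: $\Gamma(H) = M \subseteq \overline{P}$, and $\Gamma(H) \cap P = M \cap P = \{e_P\}$. By Theorem \ref{thm:sol} the existence of such an $H$ shows that $G \leftthreetimes P$ is not minimal, completing the contrapositive.

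I expect the only delicate point to be the first step, namely checking that $\Gamma$ is a genuine topological isomorphism onto $\Inn(G)$ and that $\overline{P} \subseteq \Inn(G)$. Once the identification of $G$ with $\Inn(G)$ via $\Gamma$ is in place, the translation between closed normal subgroups of $\overline{P}$ and $P$-invariant closed subgroups of $G$ is essentially formal, and conditions $(a)$--$(c)$ drop out. I would take care to phrase the reverse implication as ``not essential $\Rightarrow$ not minimal'' (the contrapositive of minimal $\Rightarrow$ essential), rather than ``not minimal $\Rightarrow$ not essential,'' since the latter would only re-prove the ``if'' direction.
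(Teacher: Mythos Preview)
Your proposal is correct and follows essentially the same route as the paper: the ``if'' direction via Corollary \ref{cor:min} (the paper cites item (3), you cite item (2); since $P\leq\Inn(G)$ and $\Inn(G)$ is closed these coincide), and the ``only if'' direction by the same contrapositive argument, pulling a nontrivial closed normal $M\trianglelefteq\overline{P}$ with $M\cap P=\{e_P\}$ back through $\Gamma^{-1}$ to obtain an $H$ satisfying (a)--(c) of Theorem \ref{thm:sol}. Your extra justification of the identity $\Gamma(f(g))=f\,\Gamma(g)\,f^{-1}$ and of $\overline{P}\subseteq\Inn(G)$ only makes the argument more explicit than the paper's.
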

				\begin{proof}
					Sufficiency follows from Corollary 
					\ref{cor:min}.3.\\
					To prove the necessity assume  that  $P$ is not essential in $\overline{P}.$ 
					Then, there exists a closed normal subgroup $N$ of $\overline{P}$ such that $N\cap P$ is trivial. Since the center of $G$ is trivial, the continuous 
					homomorphism $\Gamma\co G\to \Inn(G)$ is in fact a topological isomorphism. It follows that $H=\Gamma^{-1}(N)$ is a closed nontrivial subgroup of $G.$ Clearly, $H\cap Z(G)=\{e_G\}$. By the normality of $N,$ one can show that $H$ is $P$-invariant. Furthermore, $\Gamma(H)=N$ is a subgroup of $\overline{P}$ and $\Gamma(H)\cap P=N\cap P=\{e_P\}.$  It follows from 
					Theorem \ref{thm:sol} that $G \leftthreetimes P$ is not minimal.
				\end{proof}
				\begin{thm}\label{thm:bool}
					Let $G$ be a compact group containing a non-closed  Boolean subgroup $B$ and let $P=\Gamma (B).$  If in addition  $\overline{B}\cap Z(G)$ is trivial, then $G \leftthreetimes P$ is not minimal.
				\end{thm}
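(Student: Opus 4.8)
The plan is to verify condition $(2)$ of Theorem \ref{thm:sol} by exhibiting an explicit closed nontrivial subgroup $H\subseteq G$ satisfying requirements $(a)$--$(c)$ there. Since $B$ is not closed we have $\overline{B}\setminus B\neq\emptyset$; fix an element $x\in\overline{B}\setminus B$ and set $H:=\langle x\rangle$. Before anything else I would record two facts about $\overline{B}$, both obtained from continuity together with the density of $B$ in $\overline{B}$: the squaring map $g\mapsto g^2$ of $G$ is continuous and sends every element of $B$ to $e_G$, hence it sends every element of $\overline{B}$ to $e_G$, so $\overline{B}$ is again a Boolean group; and a Boolean group is abelian (since $uv=(uv)^{-1}=v^{-1}u^{-1}=vu$), so $\overline{B}$ is abelian. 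In particular $x^2=e_G$, so $H=\{e_G,x\}$ is a nontrivial finite — hence closed — subgroup of $G$.

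Next I would check $(a)$--$(c)$ for this $H$. Condition $(a)$ is immediate from the hypothesis: $H\subseteq\overline{B}$, so $H\cap Z(G)\subseteq\overline{B}\cap Z(G)=\{e_G\}$. For $(b)$, the point is that $P=\Gamma(B)$ acts trivially on $\overline{B}$: each $b\in B$ lies in the abelian group $\overline{B}$, hence commutes with all of $\overline{B}$, i.e.\ $\gamma_b$ restricts to the identity on $\overline{B}$; therefore $f(H)=H$ for every $f\in P$, which is exactly the $P$-invariance of $H$. For $(c)$, continuity of $\Gamma\co G\to\Inn(G)$ gives $\Gamma(H)\subseteq\Gamma(\overline{B})\subseteq\overline{\Gamma(B)}=\overline{P}$. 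Finally $\ker\Gamma=Z(G)$, so the restriction of $\Gamma$ to $\overline{B}$ has kernel $\overline{B}\cap Z(G)=\{e_G\}$ and is therefore injective on $\overline{B}$; since $x\notin B$ we have $H\cap B=\{e_G\}$, and injectivity of $\Gamma$ on $\overline{B}\supseteq H\cup B$ yields $\Gamma(H)\cap P=\Gamma(H)\cap\Gamma(B)=\Gamma(H\cap B)=\{e_P\}$. Thus $H$ satisfies all of $(a)$--$(c)$, and Theorem \ref{thm:sol} gives that $G\leftthreetimes P$ is not minimal.

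I do not anticipate a genuine obstacle here: the substantive content is already packaged in Theorem \ref{thm:sol} (and behind it Theorem \ref{t:Aut} and the Minimality Criterion). The only points that need a moment of care are the two closure statements for $\overline{B}$ — that it is Boolean, hence abelian — and the consequent triviality of the $P$-action on $\overline{B}$; it is precisely this triviality that makes the $P$-invariance of $H$ and the injectivity of $\Gamma|_{\overline{B}}$ fall out for free, so essentially any $x\in\overline{B}\setminus B$ works and the rest is bookkeeping.
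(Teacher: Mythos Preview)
Your proposal is correct and follows essentially the same route as the paper: pick $x\in\overline{B}\setminus B$, set $H=\{e_G,x\}$, and verify conditions $(a)$--$(c)$ of Theorem~\ref{thm:sol} using that $\overline{B}$ is Boolean and abelian. Your write-up is in fact slightly more careful than the paper's in two places --- you justify explicitly why $\overline{B}$ is Boolean and why that forces it to be abelian, and your verification of $\Gamma(H)\cap P=\{e_P\}$ via injectivity of $\Gamma|_{\overline{B}}$ is a clean repackaging of the paper's direct argument (if $\gamma_b=\gamma_h$ then $e_G\neq hb\in\overline{B}\cap Z(G)$).
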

				\begin{proof}
					By Theorem \ref{thm:sol}, it suffices to show that there exists a closed nontrivial subgroup $H\leq G$ with properties $(a)-(c).$ 
					Since $B$ is a Boolean subgroup of $G$, its closure $\overline{B}$ is also Boolean. Fix $h\in \overline{B}\setminus B$ and let $H=\{h,e_G\}.$
					Then, $H$ is a nontrivial closed subgroup of $G$ with $H\cap Z(G)=\{e_G\}.$ For every $b\in B$  we have 
					$bhb^{-1}=h,$
					since $\overline{B}$ is abelian. This implies that $H$ is $P$-invariant. Clearly, $\Gamma(H)$ is a subgroup of $\overline{P}=\Gamma(\overline{B}).$ Finally, assume for contradiction that $\Gamma(H)\cap P\neq\{e_P\}.$ Hence, there exists $b\in B$ such that $\gamma_b=\gamma_h.$ 
					This implies that 
					 $e_G \neq hb\in \overline{B}\cap Z(G),$ a contradiction.
				\end{proof}
	We use Theorem \ref{thm:bool} in the following example, where we show that there exist a compact two-step nilpotent group $G$  and a subgroup  $P$ of $\Aut(G)$  such that
	$G\leftthreetimes P$ is not minimal. This answers a question of Dikranjan.   
	
				\begin{example}\label{nil}
					Let $R$ be the compact ring $\Z_2^{\N}$ (with operations defined coordinatewise) and 
					consider its dense subring 
					  $$\tilde R:=\{(x_n)_{n\in \N}:x_n\in \Z_2 \wedge \ |n:x_n\neq 0|<\infty\}.$$ 
					  Let $G:=(R\times R)\leftthreetimes R$ be the  {\it generalized Heisenberg group} (see, for example, \cite{MegDik}) defined via the action 	
					$$\pi \co R\times (R\times R) \to R\times R, \ \pi (f,(a,x))=(a+fx,x).$$
					 By \cite[Lemma 2.1]{Meg95},   $Z(G)=(R\times \{0_R\})\leftthreetimes \{0_R\}.$
					 Let $B:=(\{0_R\}\times \tilde R)\leftthreetimes \{0_R\}$  and 
					 let $P=\Gamma(B).$ Then, $B$ is a non-closed Boolean subgroup of $G.$ Indeed, its closure $\overline B$ coincides with $(\{0_R\}\times  R)\leftthreetimes \{0_R\}.$  Since  $\overline B\cap Z(G)$ is trivial, it follows from Theorem \ref{thm:bool} that $G\leftthreetimes P$ is not minimal.
				\end{example}

\vskip 0.3cm
	 \noindent {\bf Acknowledgments.} We thank
	 R. Ben-Ari  and
	 D. Dikranjan for valuable suggestions. We also thank the referee for the careful reading and 
	 very useful comments
	  and suggestions.

\bibliographystyle{plain}

\end{document}